\documentclass[pdflatex,sn-mathphys-num]{sn-jnl}

\usepackage{graphicx}
\usepackage{multirow}
\usepackage{amsmath,amssymb,amsfonts}
\usepackage{subcaption}
\usepackage{amsthm}
\usepackage{mathrsfs}
\usepackage{float}
\usepackage[title]{appendix}
\usepackage{xcolor}
\usepackage{textcomp}
\usepackage{manyfoot}
\usepackage{booktabs}
\usepackage{algorithm}
\usepackage{algorithmicx}
\usepackage{algpseudocode}
\usepackage{listings}
\usepackage{diagbox}
\usepackage[shortlabels]{enumitem}
\usepackage{booktabs} 
\usepackage{xcolor}
\usepackage{rotating}
\usepackage{caption}

\makeatletter
\let\orcidlogo\@undefined
\makeatother
\usepackage{orcidlink}

\theoremstyle{thmstyleone}
\newtheorem{theorem}{Theorem}[section]
\newtheorem{prop}[theorem]{Proposition}

\newtheorem{lemma}[theorem]{Lemma}

\theoremstyle{remark}
\newtheorem{remark}[theorem]{Remark}
\theoremstyle{thmstylethree}
\newtheorem{definition}[theorem]{Definition}

\newcommand{\espe}{\mathbb{E}}
\newcommand{\prob}{\mathbb{P}}
\newcommand{\var}{\operatorname{Var}}

\newcommand{\dpareto}{\operatorname{Pareto}}

\newcommand{\dgeom}{\operatorname{Geom}^*}
\newcommand{\dfrechet}{\operatorname{Fr\acute{e}chet}}
\newcommand{\dloglogistic}{\operatorname{Log-logistic}}

\newcommand{\e}{\operatorname{e}}

\newcommand{\toas}{\xrightarrow{\text{a.s.}}}

\newcommand{\R}{\operatorname{\mathbb{R}}}
\newcommand{\Z}{\operatorname{\mathbb{Z}}}

\newcommand{\N}{\operatorname{\mathbb{N}}}

\begin{document}

\title[Estimating the tail index from geometric records]{Estimating the tail index of Pareto-type distributions from geometric records}

\author[1,2]{\fnm{Mart\'{i}n} \sur{Alcalde}\,\orcidlink{0009-0007-4275-2998}}\email{malcalde@unizar.es}
\equalcont{These authors contributed equally to this work.}

\author[3]{\fnm{Ra\'ul} \sur{Gouet}\,\orcidlink{0000-0003-3125-6773}}\email{rgouet@dim.uchile.cl}
\equalcont{These authors contributed equally to this work.}

\author*[1,2]{\fnm{Miguel} \sur{Lafuente}\,\orcidlink{0000-0001-8471-3224}}\email{miguellb@unizar.es}
\equalcont{These authors contributed equally to this work.}

\author[1,2]{\fnm{F. Javier} \sur{L\'{o}pez}\,\orcidlink{0000-0002-7615-2559}}\email{javier.lopez@unizar.es}
\equalcont{These authors contributed equally to this work.}

\author[1,2]{\fnm{Gerardo} \sur{Sanz}\,\orcidlink{0000-0002-6474-2252}}\email{gerardo.sanz@unizar.es}
\equalcont{These authors contributed equally to this work.}

\affil[1]{\orgdiv{Departamento de M\'{e}todos Estad\'{i}sticos}, \orgname{Facultad de Ciencias, Universidad de Zaragoza}, \orgaddress{\street{C/ Pedro Cerbuna, 12}, \city{Zaragoza}, \postcode{50009}, \country{Spain}}}

\affil[2]{\orgdiv{Instituto de Biocomputaci\'{o}n y F\'{i}sica de Sistemas Complejos (BIFI)}, \orgname{Universidad de Zaragoza}, \orgaddress{\street{C/ Mariano Esquillor, Edificio I + D}, \city{Zaragoza}, \postcode{50018}, \country{Spain}}}

\affil[3]{\orgdiv{Departamento de Ingenier\'{i}a Matem\'{a}tica y Centro de Modelamiento Matem\'{a}tico}, \orgname{Universidad de Chile}, \orgaddress{\street{Av. Beauchef, 851}, \city{Santiago, Regi\'on Metropolitana}, \postcode{8370458}, \country{Chile}}}

\maketitle

\abstract{
In this paper, we develop a novel inferential approach based on geometric records for estimating the tail index of heavy-tailed distributions. We construct a maximum likelihood estimator for the Pareto model and establish strong consistency and asymptotic normality, providing also an explicit expression for the asymptotic variance. These results are then extended to a broad class of Pareto-type distributions. 
The performance of the estimator is assessed via Monte Carlo simulation and compared with classical estimators from the literature. The proposed method is particularly well suited for settings where data arrive sequentially, as it yields smooth estimation trajectories. It is also especially advantageous in applications such as destructive testing, where measuring each item is costly. In this context, the estimator achieves a comparable level of estimation accuracy to Hill's estimator, but with a considerably lower number of fully measured items. An application to the analysis of the distribution of fluctuations of the Dow Jones Industrial Average (DJI) is also presented.}

\keywords{Tail index, Pareto-type distributions, Records, Near-records, Geometric records, Maximum likelihood estimation} 

\section*{Abstract}In this paper, we develop a novel inferential approach based on geometric records for estimating the tail index of heavy-tailed distributions. We construct a maximum likelihood estimator for the Pareto model and establish strong consistency and asymptotic normality, providing also an explicit expression for the asymptotic variance. These results are then extended to a broad class of Pareto-type distributions. 
The performance of the estimator is assessed via Monte Carlo simulation and compared with classical estimators from the literature. The proposed method is particularly well suited for settings where data arrive sequentially, as it yields smooth estimation trajectories. It is also especially advantageous in applications such as destructive testing, where measuring each item is costly. In this context, the estimator achieves a comparable level of estimation accuracy to Hill's estimator, but with a considerably lower number of fully measured items. An application to the analysis of the distribution of fluctuations of the Dow Jones Industrial Average (DJI) is also presented.
\par\medskip

\noindent\textbf{Keywords:} Tail index, Pareto-type distributions, Records, Near-records, Geometric records, Maximum likelihood estimation

\section{Introduction}\label{introdution}

Heavy-tailed distributions arise in many fields, including economics and finance, environmental sciences, and risk analysis. They model phenomena for which extreme values occur more frequently than under classical light-tailed distributions such as the exponential or normal laws. Their ubiquity and the challenges they pose to classical statistical inference have motivated the development of specialized methods for decades, from the seminal works of Hill \cite{Hill75} and Pickands  \cite{Pickands75} to recent contributions \cite{Azeem25,Empacher25,Vogel24,Schmiedt25,She25} .

The behavior of the extreme values drawn from heavy-tailed distributions can be quantified by the tail index, which is defined as follows. Given a cumulative distribution function (CDF) $F$, the tail index of $F$ is the number $\gamma$, if it exists, such that

\begin{equation}
	\lim_{x\to\infty} \frac{1-F(tx)}{1-F(x)} = t^{-\gamma}\;\text{ for all } t>0.\label{def_regularly_varying}
\end{equation}
A CDF satisfying \eqref{def_regularly_varying} for some $\gamma\in(0,\infty)$ is said to be a Pareto-type distribution.

Estimating the tail index $\gamma$ is key for modeling and understanding the abrupt changes that are inherent to Pareto-type distributions. The reader is referred to a current and extensive review of tail index estimators provided in \cite{Fedotenkov20}. These estimators typically rely on upper extremes, exploiting the information contained in the largest observations to infer the heaviness of the tail. Among them, most approaches are based on order statistics, with Hill's estimator (1975) being the most studied in the literature. Recent examples of applications of Hill's estimator include analysis of prices and markets \cite{Begusic18, Davletov22}, risk analysis \cite{Chaudhry25,Gomes20} and hydrology \cite{Langousis16, Nerantzaki22}.

 A different family of tail-index estimators makes use of record-related observations. For instance, Berred \cite{Berred92} proposed estimators based on ordinary record values, whereas Louzaoui and El Arrouchi \cite{Louzaoui20, Louzaoui23} developed estimators based on $k$-record values.

In this paper, we propose an estimator based on geometric records. The notion of geometric records was introduced in \cite{Eliazar2005} to describe observations that exceed a fixed multiple of the current record level; see also \cite{Gouet12Geom}. More precisely, for a constant $\delta>0$, $X_j$ is called a geometric record if $X_j>\delta R_i$, where $R_i$ denotes the current record at the time $X_j$ is observed. We consider the case $\delta\in(0,1)$, which yields more data than standard records.

Our estimator is the MLE of the tail index based on a discretized version of geometric records when the underlying distribution is Pareto. We establish its consistency and asymptotic normality for Pareto-type distributions under mild assumptions.
	
Some features of our estimator are worth highlighting. First, consistency and asymptotic normality hold for every fixed value of $\delta \in(0,1)$. This contrasts with other tail-index estimators such as Hill's, whose consistency requires a tuning parameter to vary with the sample size. Second, the estimator is well suited to destructive testing schemes, in which obtaining a complete measurement necessarily destroys the experimental unit. In these contexts, sampling procedures based on record-type observations provide an attractive inferential framework (see \cite{Glick78}), as they limit full measurements to those observations that are informative for the estimation process. Third, the estimator is appropriate for situations where data arrive sequentially and the tail-index estimate is continuously updated, as in on-line monitoring settings. Since geometric records are incorporated progressively into the estimation procedure as they occur, the resulting estimation paths evolve smoothly with the sample size. This differs from procedures based on upper order statistics, where newly observed data may modify the set of extremes used for inference and consequently induce more abrupt changes in the estimate.

The paper is organized as follows. The remainder of this section introduces the definitions and notation used throughout the paper. Section \ref{section_pareto} analyzes the Pareto case, deriving the maximum likelihood estimator of the tail index and establishing its asymptotic properties. Section \ref{section_asymptotic_properties_general} considers the more general framework of Pareto-type distributions and shows that the asymptotic properties obtained in the Pareto case continue to hold under certain conditions on the tail behavior of the parent distribution.  
Section \ref{section_simulations} presents simulation results that illustrate the finite-sample performance of the proposed estimator. Our estimator is also advantageous relative to Hill's estimator, particularly in destructive-testing settings, where it achieves comparable accuracy while requiring considerably fewer measured units.
We also illustrate a practical use of the estimator through an application to a real data set. Some conclusions and ideas for future work are presented in Section \ref{conclusion}.

\subsection{Notation and preliminaries}\label{section_notation}
Mathematical notation for limits and real-valued functions used throughout the paper is as follows. Sequences are denoted by $\{a_n\}_{n\in\N}$. Given two sequences $\{a_n\}_{n\in\N}$ and $\{b_n\}_{n\in\N}$, we write $a_n\sim b_n$ to denote asymptotic equivalence, that is, $\lim_{n\to\infty} a_n/b_n = 1$. We also use the standard Landau notation $\mathrm{O}(\cdot)$ and $\mathrm{o}(\cdot)$. For a real number $x$, its floor and ceiling are denoted by $\lfloor x\rfloor$ and $\lceil x \rceil$, respectively, and its fractional part by $\{x\}$; in other words, $\{x\}:= x-\lfloor x\rfloor$. We write the positive part of $x$ as $x_+$. The indicator function is denoted by $\mathbf{1}(\cdot)$.

Regarding probabilistic notation, all random variables are defined on a common complete probability space $(\Omega,\mathcal{F},\prob)$. The expectation operator is denoted by $\espe(\cdot)$. Given a sub-$\sigma$-algebra $\mathcal{H}\subseteq\mathcal{F}$, conditional probability and conditional expectation are written as $\prob(\cdot\mid\mathcal{H})$ and $\espe(\cdot\mid\mathcal{H})$, respectively. Convergence in distribution and almost sure convergence are denoted by $\Rightarrow$ and $\toas$. The notation $\dgeom(p)$ stands for the geometric distribution supported on $\Z_+=\{0,1,2,\ldots\}$ with success probability $p$. In addition, given a Borel set $B$ and a random variable $X$ such that $\prob(X\in B)>0$, we will refer to the conditional distribution of $X$ on the event $\{X\in B\}$ as the truncated distribution of $X$ to $B$.

In what follows, $\{X_n\}_{n\in\N}$ denotes a sequence of independent and identically distributed (i.i.d.) nonnegative random variables defined on the above probability space, with continuous distribution function $F$ and survival function $\overline F:=1-F$. When the survival function satisfies
\begin{equation}
	\overline F(x) = \left(\frac{x}{D}\right)^{-\gamma},\quad x>D,\label{def_pareto}
\end{equation}
for some constant $D>0$, we say that the random variables $X_i$ have a Pareto distribution and we write $X_i \sim \dpareto(\gamma; D)$. 
A function $L:(0,\infty)\to(0,\infty)$ is said to be \emph{slowly varying at infinity} if $
\lim_{x\to\infty} L(tx)/L(x)=1 $
for every $t>0$. Note that if the random variables $X_i$ have a Pareto-type distribution, their survival function can be written as
\begin{equation}
	\overline F(x)=x^{-\gamma}L(x),\quad  x>D,\label{def_paretotype}
\end{equation}
where $L$ is slowly varying at infinity.

The sequence of partial maxima $\{M_n\}_{n\in\Z_+}$ is defined as $M_n:=\max\{X_1,\ldots,X_n\}$ for $n\in\N$, and we set $M_0=-1$ by convention. 
The sequence of record times $\{L_n\}_{n\in\N}$ is defined as $L_1:=1$ and, for $n\ge1$, $L_{n+1}:=\inf\{j>L_n: X_j>X_{L_n}\}$. The continuity of $F$ ensures that the sequence $\{L_n\}_{n\in\N}$ is almost surely well defined. 
The sequence of record values $\{R_n\}_{n\in\N}$ is given by $R_n:=X_{L_n}$ for $n\in\N$.

\begin{definition}
	Let $\delta\in(0,1)$ and $n\in\N$. We say that $X_n$ is a \emph{geometric record} if $X_n>\delta M_{n-1}$ and $X_n$ is a \emph{geometric near-record} if $X_n\in(\delta M_{n-1},M_{n-1}]$.
	\label{def_delta_record_mult}
\end{definition}

As $\delta\in(0,1)$, an observation is a geometric record if and only if it is either a record or a geometric near-record. The observations $X_j\in(\delta R_i,R_i]$, with $L_{i}<j<L_{i+1}$, are the geometric near-records which appear while the current record value is $R_i$. We say that these geometric near-records are \emph{associated} with record $R_i$. We write $\mathbf{R}=(R_1,\ldots,R_n)$ and $\mathbf{S}=(S_1,\ldots,S_n)$, where $S_i$ is the number of geometric near-records associated with $R_i$.

Before introducing further notation, we point out that, since our primary goal is to estimate the tail index $\gamma$ in Pareto-type distributions, collecting information on small observations may be disadvantageous: such values lie outside the tail and contribute little or no information about $\gamma$. Therefore, it is reasonable to consider only those records that exceed a fixed threshold $A>0$, together with their associated geometric near-records.

We define the discretization of the geometric near-records as follows: for each record $R_i$, the interval $(\delta R_i, R_i]$ is split into $m$ disjoint subintervals $(a^v \delta R_i, a^{v+1} \delta R_i]$, for $v=0,1,\ldots,m-1$, where $a=\delta^{-1/m}$ and $m\ge 2$ is a fixed integer. Clearly, each geometric near-record associated with $R_i$ falls in one of these subintervals. If $S_i>0$, we define $V_i^j$ as the index $v$ of the subinterval containing the $j$-th geometric near-record associated with $R_i$, where $j=1,\ldots,S_i$.

Similarly, we discretize the record values: for $i\ge 1$, record $R_{i} \in (R_{i-1}, \infty)$ belongs to the interval $(a^k R_{i-1}, a^{k+1} R_{i-1}]$ for some $k\ge 0$, and we define $K_{i}$ as the corresponding subinterval index; that is, $K_{i}=\left\lceil\log_a\left(\frac{R_i}{R_{i-1}}\right)\right\rceil-1$, with $R_0:= A$ by convention.
Finally, we collect all indices into vectors, namely, $\mathbf{K} = (K_1,\ldots,K_n)$ for records and 
$
\mathbf{V} = (V_1^1,\ldots,V_1^{S_1}, \ldots, V_n^1, \ldots, V_n^{S_n})$ 
for geometric near-records. Thus, our sample for the rest of the paper will be 
\begin{equation}
	\mathbf{T}_n=(\mathbf{K}_n,\mathbf{S}_n,\mathbf{V}_n),\label{eq_muestra}
\end{equation} 
collecting the information about the geometric records from the first $n$ records, once an observation greater than $A$ has appeared. When there is no risk of confusion we omit the subscript $n$ from the notation above.

\section{Estimator of the tail index for the Pareto distribution} \label{section_pareto}

In this section, we construct an estimator of the tail index for observations following the Pareto distribution given in \eqref{def_pareto}, where $D>0$ is a known constant, and prove its consistency and asymptotic normality.

In the next proposition, the distribution of the vectors of the sample $\mathbf{T}$ is established.

\begin{prop}\label{prop_distribution_t}
	Assume that $X_1\sim\dpareto(\gamma; D)$. Let $\delta\in(0,1)$, and also let $n\ge 1,m\ge 2$ be integers and $A\ge \delta^{-1}D$. Then:
	\begin{enumerate}[a)]
		\item The random variables $\mathbf{S}=(S_1,\ldots,S_n)$ are i.i.d.~with distribution $\dgeom(\delta^\gamma)$ and are independent of the record values $\mathbf{R}$. \label{prop_distribution_t_si}
		\item \label{prop_distribution_t_b} Let $i=1,\ldots,n$ and $s_i\ge1$ be an integer. Conditioned on  $\{S_i = s_i\}$, the random variables $V_i^1,\ldots,V_i^{s_i}$ are i.i.d.~following a truncated geometric distribution to  $\{0,1,\ldots,m-1\}$ with success probability $1 - \delta^{\frac{\gamma}{m}}$. Furthermore, they are  independent of the record values $\mathbf{R}$. \label{prop_distribution_t_vij}
		\item The random variables $\mathbf{K}=(K_1,\ldots,K_{n})$ are i.i.d.~with distribution $\dgeom(1-\delta^{\frac{\gamma}{m}})$ and are independent of the vectors $\mathbf{S}$ and $\mathbf{V}$. \label{part_K}
	\end{enumerate}
	\begin{proof}
		Reasoning as in Propositions 2.1-2.3 in \cite{Gouet14}, we have that, conditioned on $\{R_i=r_i\}$, with $r_i>A$, the number of geometric near-records associated with $R_i$ is independent of the number of geometric near-records associated with other records and of their values. Moreover, for $r_i>A$, $s_i\ge1$, 
		\begin{equation*}
			\prob(S_i = s_i\mid R_i = r_i)=\left(\frac{\overline{F}(\delta r_i) - \overline{F}(r_i)}{\overline{F}(\delta r_i)}\right)^{s_i}\frac{\overline{F}(r_i)}{\overline{F}(\delta r_i)}=(1-\delta^\gamma)^{s_i}\delta^\gamma,
		\end{equation*}
		thus proving $a)$.
		
		Analogously, given $R_i=r_i$, with $r_i>A$, the values of the geometric near-records associated with record $R_i$ are independent of their number $S_i$ and the values of other records and geometric near-records, with 
		\begin{align*}
			\prob(V_i^j = v_i^j \mid S_i = s_i, R_i = r_i) &= \frac{\overline{F}(a^{v_i^j}\delta r_i) - \overline{F}(a^{v_i^j + 1}\delta r_i)}{\overline{F}(\delta r_i)-\overline{F}(r_i)} = \frac{\delta^{v_i^j\frac{\gamma}{m}}(1 - \delta^{\frac{\gamma}{m}})}{1-\delta^{\gamma}},
		\end{align*}
		for all $v_i^j\in\{0,1,\ldots,m-1\}$. This proves $b)$. 
		
		For $c)$, note that the random variables $K_1,\ldots,K_n$ are functions of $(R_1,\ldots,R_n)$ and are therefore independent of $\mathbf{S}$ and $\mathbf{V}$ by $a)$ and $b)$.  Also
		\begin{equation}
			K_i = \bigg\lceil\log_a{\bigg(\frac{R_{i}}{R_{i-1}}\bigg)}\bigg\rceil - 1= \bigg\lfloor\log_a{\bigg(\frac{R_{i}}{R_{i-1}}\bigg)}\bigg\rfloor\quad\text{a.s.},\label{eq_K}\quad i=1,\ldots,n.
		\end{equation}
		In addition, from  \eqref{eq_K}, the variables $K_1,\ldots,K_{n}$ are i.i.d.~since the sequence of quotients $\{R_{i}R_{i-1}^{-1}\}_{i\in\N}$ is i.i.d. following a $\dpareto(\gamma; 1)$ distribution; see \cite{Arnold98}, p. 22. Moreover, for each $k_i\in\Z_+$,
		\begin{equation*}
			\prob(K_i = k_i) = \prob\left(\frac{R_{i}}{R_{i-1}}\in(a^{k_i},a^{k_i+1}]\right)= \delta^{k_i\frac{\gamma}{m}}(1 - \delta^{\frac{\gamma}{m}}),
		\end{equation*}
		proving $c)$.
	\end{proof}
\end{prop}

\begin{remark}By Proposition \ref{prop_distribution_t}, the components of the sample $\mathbf{T}$ associated with each record $R_i$, that is, $(K_i;S_i;V_i^1,\ldots,V_i^{S_i})$, for $i=1,\ldots,n$, are independent and identically distributed. \label{remark_independence}
\end{remark}

\begin{theorem} \label{thm_MLE}
	Assume that $X_1\sim\dpareto(\gamma; D)$. Let $\delta\in(0,1)$, and also let $n\ge 1,m\ge 2$ be integers and $A\ge \delta^{-1}D$. Then, the MLE of $\gamma$ based on the sample $\mathbf{T}$ in \eqref{eq_muestra} is given by
	\begin{equation}
		\hat{\gamma}_{\delta,m,n}^A := 
		m\log_\delta{(\hat{\beta}_{\delta,m,n}^A)},
		\label{eq_estimator_gamma}
	\end{equation}
	with
	\begin{equation}
		\hat{\beta}_{\delta,m,n}^A := 
		\dfrac{mn + \sum\limits_{i=1}^n\sum\limits_{j=1}^{S_i}V_i^j + \sum\limits_{i=1}^{n}K_i}{(m+1)n + \sum\limits_{i=1}^n\sum\limits_{j=1}^{S_i}V_i^j + \sum\limits_{i=1}^{n}K_i+\sum\limits_{i=1}^n S_i}.\label{eq_estimator_beta}
	\end{equation}
\end{theorem}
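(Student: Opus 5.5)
Write $\beta:=\delta^{\gamma/m}\in(0,1)$. The map $\gamma\mapsto\beta$ is a strictly decreasing bijection of $(0,\infty)$ onto $(0,1)$, with inverse $\gamma=m\log_\delta\beta$. By invariance of maximum likelihood under reparametrization, it therefore suffices to show that the MLE of $\beta$ is $\hat\beta_{\delta,m,n}^A$ in \eqref{eq_estimator_beta}. The plan has three steps: write down the likelihood of $\mathbf{T}$ using Proposition \ref{prop_distribution_t} and Remark \ref{remark_independence}; express it as a function of $\beta$ alone; and maximize it.

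For the first step, the independence structure in Proposition \ref{prop_distribution_t} gives a product form for the joint probability mass function:
\begin{equation*}
\prob(\mathbf{T}=\mathbf{t})=\prod_{i=1}^n \prob(K_i=k_i)\,\prob(S_i=s_i)\prod_{j=1}^{s_i}\prob(V_i^j=v_i^j\mid S_i=s_i).
\end{equation*}
Each factor is then rewritten in terms of $\beta$, using $\delta^\gamma=\beta^m$:
\begin{itemize}
\item $\prob(K_i=k_i)=\beta^{k_i}(1-\beta)$;
\item $\prob(S_i=s_i)=(1-\beta^m)^{s_i}\beta^m$;
\item $\prob(V_i^j=v\mid S_i=s_i)=\beta^{v}(1-\beta)/(1-\beta^m)$.
\end{itemize}
The factor $(1-\beta^m)^{s_i}$ from $S_i$ cancels exactly against the $s_i$ denominators coming from the $V_i^j$. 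Writing $\Sigma_V=\sum_{i,j}V_i^j$, $\Sigma_K=\sum_i K_i$ and $\Sigma_S=\sum_i S_i$, the likelihood becomes
\begin{equation*}
\mathcal{L}(\beta)=\beta^{\,mn+\Sigma_V+\Sigma_K}\,(1-\beta)^{\,n+\Sigma_S}.
\end{equation*}

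For the maximization, $\mathcal{L}$ has the form $\beta^{a}(1-\beta)^{b}$ with $a=mn+\Sigma_V+\Sigma_K\ge mn>0$ and $b=n+\Sigma_S\ge n>0$. Its logarithm $a\log\beta+b\log(1-\beta)$ is strictly concave on $(0,1)$ and tends to $-\infty$ at both endpoints. Hence it has a unique maximizer, namely the root of the derivative, $\hat\beta=a/(a+b)$. This is exactly \eqref{eq_estimator_beta}, since
\begin{equation*}
a+b=(m+1)n+\Sigma_V+\Sigma_K+\Sigma_S.
\end{equation*}
Because $0<\hat\beta<1$, the corresponding value $\hat\gamma=m\log_\delta\hat\beta$ lies in $(0,\infty)$, and invariance yields \eqref{eq_estimator_gamma}.

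None of these steps should be difficult; the one requiring care is justifying the product form of the likelihood. Proposition \ref{prop_distribution_t} gives independence of $\mathbf{K}$ from $(\mathbf{S},\mathbf{V})$, independence of $\mathbf{S}$ from $\mathbf{R}$, and the conditional law of the $V_i^j$ given $S_i$. I would combine these with the independence across different records stated in its proof and in Remark \ref{remark_independence}. One must also check that conditioning the $V_i$ on $S_i=s_i$ alone, rather than on the whole vector $\mathbf{S}$, is legitimate; this follows from that same cross-record independence.
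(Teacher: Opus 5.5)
Your proposal is correct and follows essentially the same route as the paper, which also writes the likelihood in product form via Proposition~\ref{prop_distribution_t}, reparametrizes with $\beta=\delta^{\gamma/m}$ to get $\beta^{a}(1-\beta)^{b}$ with the same exponents, maximizes to obtain $\hat\beta=a/(a+b)$, and maps back to $\gamma$. Your extra details (the cancellation of the $(1-\beta^m)^{s_i}$ factors, strict concavity of the log-likelihood, and $\hat\beta\in(0,1)$) spell out steps the paper leaves implicit.
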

\begin{proof}
	Let $\mathbf{t} = (\mathbf{k},\mathbf{s}, \mathbf{v})$ be a realization of the sample $\mathbf{T}$. By Proposition \ref{prop_distribution_t}, the likelihood at $\mathbf{t}$ is 
	\begin{equation}
		\mathcal{L}(\mathbf{t;\gamma}) = \prob(\mathbf{K}=\mathbf{k})\ \prob(\mathbf{S}=\mathbf{s}, \mathbf{V}=\mathbf{v})
		=
		(\delta^{\gamma})^{n + \frac{1}{m}\sum\limits_{i=1}^n\sum\limits_{j=1}^{s_i}v_i^j+\frac{1}{m}\sum\limits_{i=1}^{n}k_i}(1-\delta^{\frac{\gamma}{m}})^{n +\sum\limits_{i=1}^n s_i}.
		\label{eq_2}
	\end{equation}
	Using the reparametrization $\beta = \delta^{\frac{\gamma}{m}}$, the expression \eqref{eq_2} becomes
	$$
	\mathcal{L}(\mathbf{t};\beta) =
	\;\beta^{mn + \sum\limits_{i=1}^n\sum\limits_{j=1}^{s_i}v_i^j+\sum\limits_{i=1}^{n}k_i}(1-\beta)^{n  +\sum\limits_{i=1}^n s_i},
	$$
	so the MLE of $\beta$ is given in \eqref{eq_estimator_beta}. The MLE of $\gamma$  in \eqref{eq_estimator_gamma} is obtained by plugging the MLE $\hat{\beta}^A_{\delta,m,n}$ into the  relationship between $\beta$ and $\gamma$.
\end{proof}

\begin{remark}
The assumption $A\ge \delta^{-1}D$ in Proposition~\ref{prop_distribution_t} and Theorem~\ref{thm_MLE} is needed to prove the stated results. If this condition is not satisfied, the distributions of $S$ and $V$ are more complex, and the maximum likelihood estimator has no closed-form.

\end{remark}

\begin{remark}\label{D_descon} In Theorem \ref{thm_MLE} we assume that $D$ is known. However, the MLE of $\gamma$ in \eqref{eq_estimator_gamma} does not explicitly depend on $D$. Therefore, the same expression for the MLE remains valid even when $D$ is unknown, provided that the threshold $A$ satisfies $A \ge \delta^{-1} D$.
\end{remark}

\subsection{Asymptotic properties of the estimator for Pareto samples} \label{section_asymptotics_pareto}
In this section, we study the asymptotic properties of $\hat{\gamma}_{\delta,m,n}^A$ as the number of records $n$ tends to infinity.

We first establish its strong consistency. To this end, we need some results on the moments of the statistics of the sample $\mathbf{T}$.

\begin{lemma} \label{lemma_sum_v}
	Assume that $X_1\sim\dpareto(\gamma; D)$. Let $\delta\in(0,1)$, and also let $n\ge 1,m\ge 2$ be integers and $A\ge \delta^{-1}D$. Then:
	\begin{enumerate}[a)] 
		\item For $s\ge1$:
		\begin{equation*}
			\espe(V_1^1\mid S_1 = s) = \frac{\delta^{\frac{\gamma}{m}}}{1-\delta^{\frac{\gamma}{m}}} -m\frac{\delta^\gamma}{1-\delta^\gamma},\quad
			\var(V_1^1\mid S_1 = s) = \frac{\delta^{\frac{\gamma}{m}}}{(1-\delta^{\frac{\gamma}{m}})^2} - m^2\frac{\delta^\gamma}{(1-\delta^\gamma)^2}.
		\end{equation*}
		\item The components of the vector
		$\left(\sum_{j=1}^{S_1}V_1^j,\ldots, \sum_{j=1}^{S_n}V_n^j\right)$ are i.i.d. with expectation
		\begin{equation}\espe\bigg(\sum_{j=1}^{S_1}V_1^j\bigg) = \frac{1-\delta^\gamma}{(1-\delta^{\frac{\gamma}{m}})\delta^{\gamma(1-\frac{1}{m})}}-m.
			\label{lemma_sum_v_expectation}\end{equation}
	\end{enumerate}
	\begin{proof}
		Part $a)$ is an immediate consequence of Proposition \ref{prop_distribution_t} and Lemma \ref{lemma_truncated_geometric} in the Appendix. For part $b)$, note that the components of the random vector are i.i.d.~from Propositions
		\ref{prop_distribution_t}, $a)$ and $b)$. Now, since $\espe(V_1^1\mid S_1 = s)$ does not depend on $s$ provided that $s\ge 1$, $\espe(\sum_{j=1}^{S_1}V_1^j) =\espe(\sum_{j=1}^{S_1}V_1^j\mathbf{1}(S_1 \ge 1)) = \espe(S_1)\espe(V_1^1\mid S_1 \ge 1)$, which yields \eqref{lemma_sum_v_expectation}.
	\end{proof}
\end{lemma}

\begin{theorem}
	\label{theorem_LLN}Assume that $X_1\sim\dpareto(\gamma; D)$. Let $\delta\in(0,1)$, $m\ge 2$ be an integer, and $A\ge \delta^{-1}D$. Then,
	\begin{equation}\label{forLLN}
		\hat{\gamma}_{\delta, m,n}^A\toas \gamma\;\text{ as }\; n\to\infty.
	\end{equation}
	\begin{proof}
		Note that \eqref{forLLN} is equivalent to $\hat{\beta}^A_{\delta,m,n}\toas\delta^{\frac{\gamma}{m}}$ as $n\to\infty$. From \eqref{eq_estimator_beta}, we have
		$$
		\hat{\beta}_{\delta,m,n}^A = \frac{m + \frac{1}{n}\sum\limits_{i=1}^n\sum\limits_{j=1}^{S_i}V_i^j + \frac{1}{n}\sum\limits_{i=1}^{n}K_i}{m+1 + \frac{1}{n}\sum\limits_{i=1}^n\sum\limits_{j=1}^{S_i}V_i^j + \frac{1}{n}\sum\limits_{i=1}^{n}K_i+\frac{1}{n}\sum\limits_{i=1}^n S_i}.
		$$
		Given that, by Proposition \ref{prop_distribution_t} and Lemma \ref{lemma_sum_v}, each of the random vectors $(\sum_{1\le j\le S_1} V_1^j,\ldots,\sum_{1\le j\le S_n} V_n^j)$, $(S_1,\ldots, S_n)$, and $(K_1,\ldots, K_n)$ are i.i.d.,  we have
		\begin{equation*}
			\hat{\beta}_{\delta,m,n}^A\toas \frac{m + \espe\bigg(\sum\limits_{j=1}^{S_1}V_1^j\bigg) + \espe(K_1)}{m + 1+ \espe\bigg(\sum\limits_{j=1}^{S_1}V_1^j\bigg) + \espe(K_1) + \espe(S_1)}\;\text{ as }n\to\infty.
		\end{equation*}
		By replacing $\espe(S_1)$,  $\espe(K_1)$ and $\espe(\sum_{1\le j\le S_1}V_1^j)$ with their values (see  
		Proposition \ref{prop_distribution_t} parts $a)$ and $c)$ and Lemma \ref{lemma_sum_v}), the result follows.
	\end{proof}
\end{theorem}

We now prove the asymptotic normality of our estimator, also finding its asymptotic variance. We begin with a technical lemma.

\begin{lemma}\label{lemma_a}
	Assume that $X_1\sim\dpareto(\gamma; D)$. Let $\delta\in(0,1)$, and also let $n\ge 1,m\ge 2$ be integers and $A\ge \delta^{-1}D$. Define 
	\begin{equation}\label{defU}
		U_i := \sum_{j=1}^{S_i} V_i^j - \frac{\delta^{\frac{\gamma}{m}}}{1-\delta^{\frac{\gamma}{m}}}S_i + K_i,\quad i=1,\ldots,n.
	\end{equation} 
	Then $U_1,\ldots,U_n$ are i.i.d. random variables with 
	\begin{equation*}
		\espe(U_1) = \frac{\delta^{\frac{\gamma}{m}}}{1-\delta^{\frac{\gamma}{m}}}-m,\qquad
		\var(U_1) = \frac{1}{(1-\delta^{\frac{\gamma}{m}})^2\delta^{\gamma(1-\frac{1}{m})}}.
	\end{equation*} 
	\begin{proof}Both the i.i.d.~property and the expression for the expectation follow directly from Proposition \ref{prop_distribution_t}, Remark \ref{remark_independence} and Lemma \ref{lemma_sum_v}. As for the variance, we have
		{\allowdisplaybreaks
			\begin{align*}
				\var(U_1) &= \var\bigg(\bigg(\sum_{j=1}^{S_1} V_1^j - \frac{\delta^{\frac{\gamma}{m}}}{1-\delta^{\frac{\gamma}{m}}}S_1\bigg)\mathbf{1}(S_1\ge 1)\bigg) + \var(K_1)\\
				&= \var\bigg(\espe\bigg(\sum_{j=1}^{S_1} V_1^j - \frac{\delta^{\frac{\gamma}{m}}}{1-\delta^{\frac{\gamma}{m}}}S_1\ \bigg\vert\  S_1\bigg)\mathbf{1}(S_1\ge 1)\bigg)\\
				&\quad + \espe\bigg(\var\bigg(\sum_{j=1}^{S_1} V_1^j - \frac{\delta^{\frac{\gamma}{m}}}{1-\delta^{\frac{\gamma}{m}}}S_1\ \bigg\vert\ S_1\bigg)\mathbf{1}(S_1\ge 1)\bigg)
				 + \frac{\delta^{\frac{\gamma}{m}}}{(1-\delta^{\frac{\gamma}{m}})^2}\\
				&= \var\bigg(S_1\bigg(\espe(V_1^1\mid S_1) - \frac{\delta^{\frac{\gamma}{m}}}{1-\delta^{\frac{\gamma}{m}}}\bigg)\mathbf{1}(S_1\ge 1)\bigg)\\
				&\quad+ \espe(S_1\var(V_1^1\mid S_1)\mathbf{1}(S_1\ge 1)) + \frac{\delta^{\frac{\gamma}{m}}}{(1-\delta^{\frac{\gamma}{m}})^2}\\
				&=m^2\frac{\delta^{2\gamma}}{(1-\delta^\gamma)^2}\var(S_1)+ \bigg(\frac{\delta^{\frac{\gamma}{m}}}{(1-\delta^{\frac{\gamma}{m}})^2} - m^2\frac{\delta^\gamma}{(1-\delta^\gamma)^2}\bigg)\ \espe(S_1) +  \frac{\delta^{\frac{\gamma}{m}}}{(1-\delta^{\frac{\gamma}{m}})^2}\\
				&= \frac{1}{(1-\delta^{\frac{\gamma}{m}})^2\delta^{\gamma(1-\frac{1}{m})}}.
			\end{align*}
		}
	\end{proof}
\end{lemma}

\begin{theorem}\label{theorem_TCL}
	Assume that $X_1\sim\dpareto(\gamma; D)$. Let $\delta\in(0,1)$, $m\ge 2$ be an integer, and $A\ge \delta^{-1}D$. Then,
	\begin{equation*}
		\sqrt{n}(\hat{\gamma}_{\delta, m,n}^A-\gamma)\Rightarrow \sigma_{\delta, m}Z\;\text{ as }\;n\to\infty,\end{equation*}
	where $Z$ is a standard normal random variable and
	\begin{equation}
		\sigma_{\delta,m} := \frac{m(1-\delta^{\frac{\gamma}{m}})\delta^{\frac{\gamma}{2}(1 - \frac{1}{m})}}{-\log{(\delta)}}.\label{asymptotic_variance}
	\end{equation}
	\begin{proof}
		First, we prove the asymptotic normality of   $\hat{\beta}_{\delta,m,n}^A$. Observe that
		\begin{equation}
			\sqrt{n}(\hat{\beta}_{\delta,m,n}^A-\delta^{\frac{\gamma}{m}})=
			(1-\delta^{\frac{\gamma}{m}})\sqrt{n}
			\left(\frac{m + \frac{1}{n}\sum\limits_{i=1}^n\sum\limits_{j=1}^{S_i}V_i^j + \frac{1}{n}\sum\limits_{i=1}^{n}K_i-\frac{\delta^{\frac{\gamma}{m}}}{1-\delta^{\frac{\gamma}{m}}}\frac{1}{n}\sum\limits_{i=1}^n S_i-\frac{\delta^{\frac{\gamma}{m}}}{1-\delta^{\frac{\gamma}{m}}}}{m+1+ \frac{1}{n}\sum\limits_{i=1}^n\sum\limits_{j=1}^{S_i}V_i^j + \frac{1}{n}\sum\limits_{i=1}^{n}K_i+\frac{1}{n}\sum\limits_{i=1}^n S_i}\right).\label{eq_5}
		\end{equation}
		As shown in the proof of Theorem \ref{theorem_LLN}, the denominator on the right-hand side of \eqref{eq_5} converges to $\delta^{-\gamma}(1-\delta^{\frac{\gamma}{m}})^{-1}$ almost surely as $n\to\infty$. Thus, we have
		\begin{align*}
			\sqrt{n}&(\hat{\beta}_{\delta,m,n}^A-\delta^{\frac{\gamma}{m}})\nonumber\\
			&\sim (1-\delta^{\frac{\gamma}{m}})^2\delta^\gamma\sqrt{n}\left(m + \frac{1}{n}\sum\limits_{i=1}^n\sum\limits_{j=1}^{S_i}V_i^j + \frac{1}{n}\sum\limits_{i=1}^{n}K_i-\frac{\delta^{\frac{\gamma}{m}}}{1-\delta^{\frac{\gamma}{m}}}\frac{1}{n}\sum\limits_{i=1}^n S_i-\frac{\delta^{\frac{\gamma}{m}}}{1-\delta^{\frac{\gamma}{m}}}\right)\nonumber\\
			&= (1-\delta^{\frac{\gamma}{m}})^2\delta^\gamma\sqrt{n}\left(\frac{1}{n}\sum_{i=1}^{n} U_i - \frac{\delta^{\frac{\gamma}{m}}}{1-\delta^{\frac{\gamma}{m}}} + m\right)\; \text{ a.s. as }  n\to\infty,
		\end{align*}
		where the variables $U_i$ are defined in \eqref{defU}. By Lemma \ref{lemma_a}, we readily obtain 
		\begin{equation}
			\sqrt{n}(\hat{\beta}_{\delta,m,n}^A-\delta^{\frac{\gamma}{m}})\Rightarrow (1-\delta^{\frac{\gamma}{m}})\delta^{\frac{\gamma}{2}(1+\frac{1}{m})}Z.\label{eq_tcl_beta}
		\end{equation}
		Finally, we apply the Delta Method to \eqref{eq_tcl_beta} taking $g(x):=m\log_\delta(x)$, $x\in(0,1)$, (see, e.g., \cite{Ferguson96}, Theorem 7, p. 45) and the proof is complete.
	\end{proof}
\end{theorem}

\begin{remark}  The asymptotic standard deviation $\sigma_{\delta,m}$ given in \eqref{asymptotic_variance} satisfies
	\begin{equation}
		\sigma_{\delta,m}\in(\gamma\delta^{\frac{\gamma}{2}}, \gamma)
		\label{inequations_asymptotic_sigma}
	\end{equation}
	for all $\delta\in(0,1)$ and $m\ge 2$. A direct calculus check shows that $\sigma_{\delta,m}$ is decreasing in $m$ and increasing in $\delta$. The lower and upper bounds in \eqref{inequations_asymptotic_sigma} correspond to the limits of $\sigma_{\delta,m}$ as $m\to\infty$ and as $\delta\to 1^{-}$, respectively. Regarding the parameter $m$, the larger its value, the finer the discretization of the intervals $(\delta R_i, R_i]$ and $(R_i,\infty)$ becomes. Hence, larger values of $m$ yield less variable estimates. Conversely, the standard deviation $\sigma_{\delta,m}$ must increase as $\delta$ increases, since larger values of $\delta$ imply that fewer geometric near-records are observed, thus increasing the uncertainty. 
\end{remark}

Before concluding this section, we present a result, which shows that, asymptotically, our estimator is not affected by the first statistics of the sample $\mathbf{T}$. This conclusion is natural, since the change of the values of a random but finite number of terms in \eqref{eq_estimator_beta} becomes negligible for large enough $n$. This property will be useful when we address the generalization of Theorems \ref{theorem_LLN} and  \ref{theorem_TCL}.

\begin{prop}\label{prop_AB}
	Assume that $X_1\sim\dpareto(\gamma;D)$, and let $\delta\in(0,1)$, $m\ge2$ be an integer, and $A\ge\delta^{-1}D$.
	Let $\mathbf{T}\equiv\mathbf{T}_{\delta,m,n}^A$ be the associated sample and let $M$ be a random variable taking values in $\N$ almost surely. Consider any modified sample $\mathbf{T}(M)\equiv\mathbf{T}_{\delta,m,n}^A(M)$ that coincides with $\mathbf{T}$
	from index $M$ onward, and let $\hat{\gamma}_{\delta,m,n}^A$ and $\hat{\gamma}_{\delta,m,n}^A(M)$ denote the estimators computed from
	$\mathbf{T}$ and $\mathbf{T}(M)$, respectively. Then,
	\begin{equation*}
		\left\vert\hat{\gamma}_{\delta,m,n}^A(M) - \hat{\gamma}_{\delta,m,n}^A\right\vert
		= \mathrm{O}(n^{-1}) \; \text{ a.s. as } n\to\infty.
	\end{equation*}
	In particular, the limiting behavior of $\hat{\gamma}_{\delta,m,n}^A$ stated in Theorems \ref{theorem_LLN} and  \ref{theorem_TCL} is shared by $\hat{\gamma}_{\delta,m,n}^A(M)$.
\end{prop}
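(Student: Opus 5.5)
Write $\hat\beta=\hat\beta^A_{\delta,m,n}=N_n/D_n$ as in \eqref{eq_estimator_beta}, with
$$N_n:=mn+\sum_{i=1}^n\sum_{j=1}^{S_i}V_i^j+\sum_{i=1}^n K_i,\qquad D_n:=N_n+n+\sum_{i=1}^n S_i .$$
Let $\hat\beta(M)=N_n(M)/D_n(M)$ be the same quantities computed from $\mathbf T(M)$. The plan is to bound $|\hat\beta(M)-\hat\beta|$ pathwise and then transfer the bound to $\hat\gamma$ through the map $g(x)=m\log_\delta x$.

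First I control the numerators and denominators. Since $\mathbf T(M)$ and $\mathbf T$ agree for indices $i\ge M$, the differences $N_n(M)-N_n$ and $D_n(M)-D_n$ come only from the summands with $i<M$. Hence for all $n\ge M$,
$$|N_n(M)-N_n|\le C_M,\qquad |D_n(M)-D_n|\le C_M,$$
where $C_M$ is the finite random quantity $\sum_{i<M}\bigl(|K_i|+|K_i(M)|+|S_i|+|S_i(M)|+\sum_j V_i^j+\sum_j V_i^j(M)\bigr)$. This $C_M$ is finite almost surely because $M<\infty$ a.s. and each modified entry is a finite integer. It does not depend on $n$. Also $N_n\ge mn$ and $D_n\ge (m+1)n$, and the same lower bounds hold for $N_n(M),D_n(M)$ once $n\ge M$, since the terms with $i\ge M$ already contribute at least $m(n-M+1)$; so in all cases the lower bound is at least of order $n$.

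Next I use the identity
$$\frac{N'}{D'}-\frac{N}{D}=\frac{(N'-N)D-N(D'-D)}{D'D}.$$
Since $N\le D$, this gives $|\hat\beta(M)-\hat\beta|\le 2C_M\,D_n/(D_n(M)D_n)=2C_M/D_n(M)\le 2C_M/(cn)$ for large $n$. Thus $|\hat\beta(M)-\hat\beta|=\mathrm O(n^{-1})$ almost surely.

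Then I pass to $\hat\gamma$. By Theorem \ref{theorem_LLN}, $\hat\beta\to\delta^{\gamma/m}\in(0,1)$ almost surely, so by the bound just obtained $\hat\beta(M)$ has the same limit. On a compact neighbourhood of $\delta^{\gamma/m}$ inside $(0,1)$ the function $g$ is Lipschitz. The mean value theorem then gives $|\hat\gamma(M)-\hat\gamma|\le L|\hat\beta(M)-\hat\beta|=\mathrm O(n^{-1})$ almost surely. The only subtle step is making sure the modified entries cannot place $\hat\beta(M)$ at $0$ or $1$, where $g$ blows up. This is settled by the lower bound $N_n(M)\ge m(n-M+1)$ together with the almost sure convergence, for $n$ large enough.

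Finally, for the limit theorems, $\hat\gamma(M)-\hat\gamma\to0$ almost surely gives consistency. Moreover, $\sqrt n\,(\hat\gamma(M)-\hat\gamma)=\mathrm O(n^{-1/2})\to0$ almost surely, so Slutsky's lemma combined with Theorem \ref{theorem_TCL} yields the same normal limit $\sigma_{\delta,m}Z$ for $\hat\gamma(M)$.
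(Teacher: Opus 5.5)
Your proposal is correct and takes essentially the paper's approach. The paper also writes $\hat\beta^A_{\delta,m,n}$ as a ratio, normalized by $n$ as $\zeta_n/(\zeta_n+\xi_n)$, and bounds the difference of the two ratios by the numerator/denominator discrepancy (an a.s.\ finite sum not growing with $n$) over a denominator of order $n$; it then passes to $\hat\gamma^A_{\delta,m,n}$ via $\log(1+x)\sim x$, which matches your local Lipschitz/mean value step.
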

\begin{proof} Let us see first that 
	\begin{equation}\label{difbeta}
		\left\vert\hat{\beta}_{\delta,m,n}^A(M)-\hat{\beta}_{\delta,m,n}^A\right\vert = \mathrm{O}(n^{-1})\; \text{ a.s. as } n\to\infty.
	\end{equation}
	Define
	\begin{equation*}
		\zeta_{n} \equiv \zeta_{\delta,m,n}^A := m + \frac{1}{n}\sum\limits_{i=1}^n\sum\limits_{j=1}^{S_i}V_i^j + \frac{1}{n}\sum\limits_{i=1}^{n}K_i,\qquad
		\xi_{n} \equiv \xi_{\delta,m,n}^A := 1 + \frac{1}{n}\sum_{i=1}^n S_i,
	\end{equation*}
	and let $\zeta_{n}(M) \equiv \zeta_{\delta,m,n}^A(M)$ and $\xi_{n}(M) \equiv \xi_{\delta,m,n}^A(M)$ be their corresponding versions associated with the modified sample $\mathbf{T}(M)$. Then,
	$\hat{\beta}_{\delta,m,n}^A = \zeta_n /(\zeta_n+\xi_n)$ and $\hat{\beta}_{\delta,m,n}^A(M) = \zeta_n(M)/(\zeta_n(M)+\xi_n(M))$. We have
	\begin{align*}
		n\left|\hat{\beta}_{\delta,m,n}^A(M)-\hat{\beta}_{\delta,m,n}^A\right| &= n\frac{| (\zeta_n(M)-\zeta_n)\xi_n(M)-(\xi_n(M)- \xi_n)\zeta_n(M)|}{(\zeta_n(M)+\xi_n(M))(\zeta_n + \xi_n)}\\
		&\le \frac{n\max\{|\zeta_n(M)-\zeta_n|,|\xi_n(M)- \xi_n|\}}{\zeta_n + \xi_n}\\
		&\sim (1-\delta^{\frac{\gamma}{m}})\delta^\gamma n\max\{|\xi_n(M)- \xi_n|, |\zeta_n(M)-\zeta_n|\}\;\text{ a.s. as } n\to\infty.
	\end{align*}
	By the definitions of $\xi_n(M), \xi_n, \zeta_n(M),\zeta_n$, the differences between $\xi_n(M)$ and $\xi_n$, and between $\zeta_n(M)$ and $\zeta_n$ consist of an almost surely finite sum divided by $n$, which implies $\limsup_{n\to\infty} n\max\{|\xi_n(M)- \xi_n|, |\zeta_n(M)-\zeta_n|\}<\infty$ a.s. and \eqref{difbeta} follows. 
	To conclude, it suffices to note that
	$$
	\hat{\gamma}_{\delta,m,n}^A(M) - \hat{\gamma}_{\delta,m,n}^A = m\log_\delta{\left(1+\frac{\hat{\beta}_{\delta,m,n}^A(M) - \hat{\beta}_{\delta,m,n}^A}{\hat{\beta}_{\delta,m,n}^A}\right)} \sim \frac{m}{\delta^{\frac{\gamma}{m}}\log{(\delta)}}(\hat{\beta}_{\delta,m,n}^A(M) - \hat{\beta}_{\delta,m,n}^A)
	$$
	almost surely as $n\to\infty$.
\end{proof}

\begin{remark}\label{remark_threshold_A}
	By Proposition \ref{prop_AB}, modifying a finite number of initial terms of the sample does not affect the consistency and asymptotic normality of the estimator. Consequently, Theorems \ref{theorem_LLN} and \ref{theorem_TCL} remain valid regardless of whether $A \ge \delta^{-1}D$ holds, in spite of the fact that the estimator $\hat{\gamma}_{\delta,m,n}^A$ is no longer the maximum likelihood estimator of $\gamma$ when $A < \delta^{-1}D$. In particular, the asymptotic properties of $\hat{\gamma}_{\delta,m,n}^A$ hold without requiring knowledge of $D$. 
\end{remark}

\section{Asymptotic properties of the estimator for Pareto-type samples}\label{section_asymptotic_properties_general}
In Section \ref{section_asymptotics_pareto}, we described the asymptotic behavior of $\hat{\gamma}^A_{\delta,m,n}$. Now we focus on the extension of those properties to the general setting of Pareto-type distributions, as defined in \eqref{def_paretotype}.

In order to do so, we will transform our sequence $\{X_n\}_{n\in\N}$ of Pareto-type distributed random variables into another sequence $\{X^*_n\}_{n\in\N}$ and use the results in Section \ref{section_pareto} for obtaining the asymptotic behavior of the estimator arising from the latter sequence. Then, we will translate this behavior to the estimator $\hat{\gamma}^A_{\delta,m,n}$ from the original sequence. The main idea of the proof is to show that, under general conditions, these two estimators are not substantially different. We begin with the definition of the sequence $\{X^*_n\}_{n\in\N}$.

Let $\varphi(x) := \overline{F}(x)^{-\frac{1}{\gamma}} = xL(x)^{-\frac{1}{\gamma}}$ for all $x > D$. Given an i.i.d.~sequence $\{X_n\}_{n\in\N}$ with survival function given in \eqref{def_paretotype}, we define a new sequence $\{X^*_n\}_{n\in\N}$ as:
\begin{equation*}
	X^*_n := \varphi(X_n), \quad n \in \N.
\end{equation*}

It is clear that the transformation $\varphi$ is nondecreasing and continuous, and that $\{X^*_n\}_{n\in\N}$ is a sequence of i.i.d.~$\dpareto(\gamma; 1)$  random variables. The record times $\{L^*_i\}_{i\in\N}$ of the sequence $\{X^*_n\}_{n\in\N}$ coincide almost surely with those of $\{X_n\}_{n\in\N}$, that is, $L_i^* = L_i$ a.s. for all $i \in \N$, and the record values $\{R^*_i\}_{i\in\N}$ of the sequence $\{X^*_n\}_{n\in\N}$ are, almost surely,
$$
R^*_i=X^*_{L_i}=\varphi(R_i)=R_iL(R_i)^{-\frac{1}{\gamma}},\quad i\in\N.
$$
The qualification ``almost surely'' is required because $\overline{F}$ is not assumed to be strictly decreasing. However, since ties occur with probability zero, the record structure is preserved with probability $1$.

Regarding geometric records, note that when $R_i$ and $R^*_i$ are the current record values, the observations $X_j$ and $X^*_j$, $j\in\{L_i+1,\ldots,L_{i+1}-1\}$, are geometric records if and only if $X_j > \delta R_i$ and $X^*_j > \delta R^*_i$, respectively, which is equivalent almost surely to $\varphi(X_j) > \varphi(\delta R_i)$ and $\varphi(X_j) > \delta \varphi(R_i)$. 

Let $\mathbf{T}^*=(\mathbf{K}^*,\mathbf{S}^*,\mathbf{V}^*)$ be the sample of geometric records arising from the sequence $\{X^*_n\}_{n\in\N}$ and let $\hat{\gamma}^*_{\delta,m,n}$ be the estimator in Theorem \ref{thm_MLE} based on $\mathbf{T}^*$. The first record in the sample $\mathbf{T^*}$ is $R_1^*=\varphi(R_1)$, meaning that the threshold $A$ in Theorems \ref{theorem_LLN} and \ref{theorem_TCL} is now $A^*=\varphi(A)$. Note that $A^*$ should be greater than $\delta^{-1}$ for a direct application of these theorems, but Remark \ref{remark_threshold_A} asserts that the asymptotic behavior of the estimator in Theorem \ref{thm_MLE} is unchanged when the threshold $A$ is below $\delta^{-1}$. Therefore, we have
\begin{equation}
	\label{convgamma*}	\hat{\gamma}_{\delta, m,n}^*\toas \gamma\;\text{ and }\;\sqrt{n}(\hat{\gamma}_{\delta, m,n}^*-\gamma)\Rightarrow \sigma_{\delta, m}Z\;\text{ as }\;n\to\infty,
\end{equation}
where $Z$ is the standard normal distribution and $\sigma_{\delta, m}$ is given in \eqref{asymptotic_variance}.

Throughout this section, we assume that $L$ is differentiable and there exists some $\eta>0$ such that
\begin{equation}
	\frac{xL'(x)}{L(x)} = \mathrm{o}\ \bigg(\frac{1}{\log{(x)}(\log{\log{(x)}})^{2+\eta}}\bigg)\;\text{ as }\; x\to\infty.\label{eq_condition_L_3}
\end{equation}

\begin{remark} Condition \eqref{eq_condition_L_3} holds for the best-known examples of Pareto-type distributions, namely:
	\begin{itemize}
		\item \emph{the Pareto distribution}, since $L'(x) = 0$ for all $x>D$;
		\item (absolute value) \emph{Student's $t$ distribution}, which satisfies $xL'(x)/L(x)\sim\frac{ \gamma^2(\gamma + 1)}{\gamma + 2}x^{-2}$, with $\gamma$ being the degrees of freedom; 
		\item \emph{the Fr\'{e}chet distribution}, given by $\overline{F}(x) = 1 -\exp{(-x^{-\gamma})}$, $x> 0$,  which satisfies $xL'(x)/L(x)\sim \frac{\gamma}{2}x^{-\gamma}$; 
		\item \emph{the Burr Type III distribution} (\emph{the Dagum distribution}), which has $\overline{F}(x) = 1 - (1+x^{-\gamma})^{-p}$, $x>0$ ($p>0$), and satisfies $xL'(x)/L(x)\sim \gamma\frac{p+1}{2}x^{-\gamma}$;
		\item \emph{the Burr Type XII distribution}, that is, $\overline{F}(x) = (1 + x^\frac{\gamma}{c})^{-c}$, $x> 0$ ($c>0$), which satisfies $xL'(x)/L(x)\sim \gamma x^{-\frac{\gamma}{c}}$; for instance, the log-logistic distribution ($c=1$);
		\item \emph{stable laws} $S(\gamma,\beta,1)$ with $\gamma\in(0,1)$ and $\beta\in(-1,1]$, since $\overline{F}(x;\gamma,\beta,1) = \gamma^{-1}Cx^{-\gamma}+\mathrm{O}(x^{-2\gamma})$ and its probability density function satisfies $f(x;\gamma,\beta,1) = Cx^{-\gamma-1} + \mathrm{O}(x^{-2\gamma-1})$ (see \cite{Nolan20} , Theorem 3.5, p. 75), which yields
		$$
		\frac{xL'(x)}{L(x)} = \frac{\gamma\overline{F}(x;\gamma,\beta,1)-xf(x;\gamma,\beta,1)}{\overline{F}(x;\gamma,\beta,1)} = \mathrm{O}(x^{-\gamma}).
		$$
	\end{itemize}
\end{remark}

Since we aim to prove that the estimators $\hat{\gamma}_{\delta, m,n}^*$ and $\hat{\gamma}_{\delta, m,n}^A$ are not very different for large $n$, we  establish a series of lemmas analyzing the difference between the elements of the sample $\mathbf{T}$ and the elements of $\mathbf{T^*}$. Their proofs, some of them quite technical, are deferred to Section \ref{proofLemmas}.

Let $\mathcal{G}$ be the $\sigma$-algebra generated by the sequence of record values $\{R_i\}_{i\in\N}$.

\begin{lemma}\label{prop_dist_t_extended}
	Assume that $\overline{F}(x)=x^{-\gamma}L(x)$, $x>D,$ is a Pareto-type distribution. Let $\delta\in(0,1)$, and also let $n\ge 1,m\ge 2$ be integers and $A>0$. Then:
	\begin{enumerate}[a)]
		\item \label{prop_dist_t_extended_a} The random variables $\mathbf{S}=(S_1,\ldots,S_n)$ are independent conditional on $\mathcal{G}$. Furthermore, for each $i=1,\ldots,n$, the random variable $S_i$ conditional on the event $\{R_i=r_i\}$, for some $r_i>A$, has a distribution $\dgeom(\delta^\gamma L( r_i)/L(\delta r_i))$.
		\item \label{prop_dist_t_extended_b} Let $i=1,\ldots,n$. Conditional on  $\mathcal{G}$ and $\{S_i=s_i \}$, for some integer $s_i\ge 1$, the random variables $V_i^1,\ldots,V_i^{s_i}$ are i.i.d. In addition, for each $j=1,\ldots,s_i$, the random variable $V_i^j$ conditioned on $\{R_i=r_i, S_i=s_i\}$, for some $r_i>A$, takes values in $\{0,\ldots,m-1\}$ with probability
		\begin{align*}
			\prob(V_i^j=v\mid R_i=r_i, S_i=s_i) &=\delta^{v\frac{\gamma}{m}}\frac{L(a^v\delta r_i) - \delta^{\frac{\gamma}{m}}L(a^{v+1}\delta r_i)}{L(\delta r_i) - \delta^\gamma L(r_i)},\quad v=0,\ldots,m-1,
		\end{align*}
		with $a = \delta^{-\frac{1}{m}}$.
	\end{enumerate}
\end{lemma}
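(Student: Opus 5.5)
The plan is to follow the argument of Proposition~\ref{prop_distribution_t} (and of Propositions 2.1--2.3 in \cite{Gouet14}), now keeping the general survival function $\overline F$ in place of the Pareto one. The basic tool is the standard description of the observations between consecutive records. Condition on $\mathcal G$, that is, on the whole record value sequence $\{R_i\}$. The inter-record blocks $\{X_j: L_i<j<L_{i+1}\}$ are then conditionally independent across $i$. Within block $i$, the observations are i.i.d.\ with law $F$ truncated to $(-\infty,R_i]$. Their number $L_{i+1}-L_i-1$ is geometric with parameter $\overline F(R_i)$, and this number is independent of the values.

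I would establish this through a direct joint density/likelihood computation for the records and the intermediate observations. Since $F$ is continuous, ties are a.s.\ absent, so the record structure is well defined. Because $A>0$ is only a threshold for $R_1$ and every $r_i>A$, we can choose the indexing so that all relevant arguments $\delta r_i$ exceed $D$. The finitely many early cases where $\delta r_i\le D$ are handled by the same formula with $\overline F$ in general form, or excluded by taking $A\ge\delta^{-1}D$ as in the Pareto case.

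For part~a), I would restrict attention, within block $i$, to the observations exceeding $\delta R_i$. A standard thinning argument gives the distribution of their count. Each new observation is one of three kinds:
\begin{itemize}
\item a new record, with probability $\overline F(r_i)$;
\item a geometric near-record, with probability $\overline F(\delta r_i)-\overline F(r_i)$;
\item neither, in which case it is ignored.
\end{itemize}
So among observations exceeding $\delta r_i$, each one is independently a near-record with probability $1-\overline F(r_i)/\overline F(\delta r_i)$, until the first record appears. Hence $S_i$ given $R_i=r_i$ is $\dgeom(\overline F(r_i)/\overline F(\delta r_i))$. Substituting $\overline F(x)=x^{-\gamma}L(x)$ gives the parameter $\delta^\gamma L(r_i)/L(\delta r_i)$. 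Conditional independence of the $S_i$ given $\mathcal G$ follows from the conditional independence of the blocks.

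For part~b), I would argue that, given $\mathcal G$ and $S_i=s_i$, the near-record values in block $i$ are i.i.d.\ with law $F$ restricted to $(\delta r_i,r_i]$. This holds because the selection mechanism depends only on the interval in which each observation falls. Therefore
\[
\prob(V_i^j=v\mid R_i=r_i,S_i=s_i)=\frac{\overline F(a^v\delta r_i)-\overline F(a^{v+1}\delta r_i)}{\overline F(\delta r_i)-\overline F(r_i)}.
\]
Inserting $\overline F(x)=x^{-\gamma}L(x)$ and using $a^{-\gamma}=\delta^{\gamma/m}$ and $(a^v\delta r_i)^{-\gamma}=\delta^{-\gamma}r_i^{-\gamma}\delta^{v\gamma/m}$, then cancelling the common factor $(\delta r_i)^{-\gamma}$, yields the stated formula.

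The main obstacle is making the conditioning on the full $\sigma$-algebra $\mathcal G$ rigorous. Records after $R_i$ influence which block is which, and $R_{i+1}$ depends on the block. I would resolve this by writing the joint density of $(R_1,\dots,R_{n+1})$ together with the block contents explicitly. The density of $R_{i+1}$ given the past depends only on $R_i$, namely $f(x)/\overline F(R_i)$ on $x>R_i$, and this factorizes from the block-$i$ contents. That factorization gives the conditional independence claims exactly as in \cite{Gouet14}.
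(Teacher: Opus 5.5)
Your proposal is correct and follows the same route as the paper. The paper simply reruns the block-and-thinning argument of Proposition~\ref{prop_distribution_t} (after \cite{Gouet14}) conditionally on $\mathcal{G}$, giving the success probability $\overline{F}(r_i)/\overline{F}(\delta r_i)$ and the ratio of $\overline{F}$-increments that you rewrite in terms of $L$, so you are only spelling out the conditioning in more detail.

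One tidy-up: for the $\delta r_i\le D$ aside you need neither re-indexing nor the extra hypothesis $A\ge\delta^{-1}D$. Setting $L(x):=x^{\gamma}\overline{F}(x)$ for all $x>0$ (positive, since $\overline{F}>0$ everywhere) makes both displayed formulas hold verbatim for every $r_i>A$.
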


\begin{lemma}\label{lemma_distribution_s_i_s_tilde_i} Assume that $\overline{F}(x)=x^{-\gamma}L(x)$, $x>D,$ is a Pareto-type distribution. Let $\delta\in(0,1)$, $m\ge 2$ be an integer, and $A>0$. Define $I:=\inf\{i\in\N:R^*_i>\delta^{-1}\}$. For all $i\ge I$, we have
	\begin{equation*}
		S_i\le S^*_i\quad\text{if }\ \dfrac{L(\delta R_i)}{L( R_i)} \le 1,\qquad		S_i\ge S^*_i\quad\text{if }\ \dfrac{L(\delta R_i)}{L( R_i)} > 1.
	\end{equation*}
	Moreover, almost surely,
	
	\begin{align*}
	\prob(&S_i=k, S_i^*=\ell\mid\mathcal{G}) \\
	&= \begin{cases}
		\delta^\gamma\displaystyle\binom{\ \ell\ }{\ k\ } \left(\dfrac{L(\delta R_i)}{L(R_i)}-\delta^\gamma\right)^k\left(1-\dfrac{L(\delta R_i)}{L(R_i)}\right)^{\ell-k}
		&\text{ if }\  \dfrac{L(\delta R_i)}{L( R_i)} \le 1 \text{ and } \ell \ge k\ge 0,\\
		\delta^\gamma \left(\dfrac{L(R_i)}{L(\delta R_i)}\right)^{k+1}\displaystyle\binom{\ k\ }{\ \ell\ }\ (1-\delta^\gamma)^\ell \left(\dfrac{L(\delta R_i)}{L(R_i)} -1\right)^{k-\ell} 
		&\text{ if }\  \dfrac{L(\delta R_i)}{L( R_i)} >1 \text{ and } k \ge\ell\ge 0.\\
	\end{cases}
	\end{align*}
	Consequently,
	$$
	\prob(S_i\ne S_i^*\mid\mathcal{G})\sim \delta^{-\gamma}\left|\frac{L(\delta R_i)}{L( R_i)}-1\right|\;\text{ a.s. as }\; i\to\infty.
	$$
\end{lemma}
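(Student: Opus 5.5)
Conditional on $\mathcal{G}$ (equivalently, on the record values of both sequences, since $R_i^*=\varphi(R_i)$), the observations between $L_i$ and $L_{i+1}$ form an i.i.d.\ sequence from the law of $X$ truncated to $(-\infty,R_i]$, stopped at the first exceedance of $R_i$. Each such observation falls into one of three disjoint classes. Set $p_i:=L(\delta R_i)/L(R_i)$.

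The first class is the event $\{X>\delta R_i\}$, whose relative weight is $\overline F(\delta R_i)-\overline F(R_i)$. The second is $\{X^*>\delta R_i^*\}$, equivalently $\{\varphi(X)>\delta\varphi(R_i)\}$, i.e.\ $\overline F(X)<\delta^{-\gamma}\overline F(R_i)$, whose weight is $\delta^{-\gamma}\overline F(R_i)-\overline F(R_i)$. The requirement $i\ge I$, i.e.\ $R_i^*>\delta^{-1}$, guarantees $\delta R_i^*>1$; the threshold $\delta^{-\gamma}\overline F(R_i)$ is then below $1$, so this event is a genuine upper tail of $X$ and no support issue arises.

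Since $\overline F(\delta R_i)=\delta^{-\gamma}R_i^{-\gamma}L(\delta R_i)$ and $\delta^{-\gamma}\overline F(R_i)=\delta^{-\gamma}R_i^{-\gamma}L(R_i)$, both sets are upper tails $\{X>c\}$ of $X$. They are nested, with the larger one determined by whether $p_i>1$. This nesting gives the monotone comparison $S_i\le S_i^*$ or $S_i\ge S_i^*$ deterministically, since each near-record of the smaller set is also a near-record of the larger one, and both sequences share the same stopping time $L_{i+1}$.

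For the joint law I would use the standard multinomial-competition argument, as in the proof of Proposition~\ref{prop_distribution_t} via \cite{Gouet14}. Given $\mathcal{G}$, each trial independently lands in exactly one of the following categories: ``new record'', with probability proportional to $\overline F(R_i)$; ``in both sets''; ``only in the larger set''; or ``in neither'', which is ignored. Normalizing by the probability of the union (the larger tail) turns the sequence of relevant events into i.i.d.\ categorical trials stopped at the first record.

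Take first the case $p_i\le1$, so the larger tail has weight $\delta^{-\gamma}\overline F(R_i)$. The normalized probabilities are then $\delta^\gamma$ for a record, $p_i-\delta^\gamma$ for landing in both sets, and $1-p_i$ for landing in the $X^*$-set only. Hence $\ell$ trials of which $k$ are of the ``both'' type have probability $\binom{\ell}{k}(p_i-\delta^\gamma)^k(1-p_i)^{\ell-k}\delta^\gamma$, which is the first formula.

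In the case $p_i>1$ we normalize by $\overline F(\delta R_i)$ instead. The record probability becomes $\delta^\gamma/p_i$, landing in both sets has probability $(1-\delta^\gamma)/p_i$, and landing in the $X$-set only has probability $(p_i-1)/p_i$. Collecting the powers of $1/p_i$ yields the factor $(L(R_i)/L(\delta R_i))^{k+1}$ and the second formula.

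Finally, since $S_i$ and $S_i^*$ are ordered, $\prob(S_i\ne S_i^*\mid\mathcal{G})=1-\prob(\text{no trial of the ``one set only'' type before the record}\mid\mathcal{G})$. Summing the geometric series in case $p_i\le1$ gives
$$
\prob(S_i=S_i^*\mid\mathcal{G})=\frac{\delta^\gamma}{\delta^\gamma+1-p_i},
$$
so the complement equals $(1-p_i)/(\delta^\gamma+1-p_i)$. The analogous computation in case $p_i>1$ gives $(p_i-1)/(p_i-1+\delta^\gamma)$. Since $R_i\to\infty$ a.s.\ and $L$ is slowly varying, $p_i\to1$ a.s., and both expressions are asymptotically $\delta^{-\gamma}|p_i-1|$. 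The main care point is the bookkeeping for the event $\{X^*>\delta R^*_i\}$ through $\varphi$ when $\overline F$ is not strictly decreasing. The exceptional sets there have probability zero, so the identities hold almost surely.
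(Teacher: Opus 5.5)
Your proposal is correct and follows essentially the paper's argument. It uses the same nesting of the two near-record windows according to whether $L(\delta R_i)/L(R_i)\le 1$, the same cell probabilities $\delta^\gamma$, $L(\delta R_i)/L(R_i)-\delta^\gamma$ and $1-L(\delta R_i)/L(R_i)$ (rescaled in the other case), and the same geometric-series evaluation of $\prob(S_i\ne S_i^*\mid\mathcal{G})$. The only difference is packaging: the paper writes the joint law as a binomial thinning of $S_i^*$ given $\{S_i^*=\ell\}$ times the $\dgeom(\delta^\gamma)$ marginal of $S_i^*$ from Proposition~\ref{prop_distribution_t}~a), treating the case $L(\delta R_i)>L(R_i)$ as analogous, whereas you read both cases off directly from the three-category competition stopped at the next record.
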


\begin{lemma}\label{lemma_distribution_v_i_v_tilde_i}  Assume that $\overline{F}(x)=x^{-\gamma}L(x)$, $x>D,$ is a Pareto-type distribution. Let $\delta\in(0,1)$, $m\ge 2$ be an integer, and $A>0$. Define $I:=\inf\{i\in\N:R^*_i>\delta^{-1}\}$. For all $i\ge I$ and $s\ge1$, the pairs $(V_i^1, V_i^{*1}),\ldots,(V_i^{s}, V_i^{*s})$ are i.i.d., conditional on $\{S_i=S^*_i, S_i^* = s\}$  and on the $\sigma$-algebra $\mathcal{G}$, with common bivariate distribution
	\begin{align*}
		\prob(&V_i^1 = v, V_i^{*1} =v^*\mid S_i=S^*_i,S^*_i = s, \mathcal{G}) \\
		&= \left(\min\left\{\frac{L(\delta R_i)}{L(R_i)}, 1\right\}-\delta^\gamma\right)^{-1}\\
		&\quad\times \left(\min\left\{\delta^{v\frac{\gamma}{m}}\frac{L(a^v\delta R_i)}{L(R_i)}, \delta^{v^*\frac{\gamma}{m}}\right\}-\delta^{\frac{\gamma}{m}}\max\left\{\delta^{v\frac{\gamma}{m}}\frac{L(a^{v+1}\delta R_i)}{L(R_i)}, \delta^{v^*\frac{\gamma}{m}}\right\}\right)_+\text{ a.s.},
	\end{align*}
	for all $v,v^*=0,\ldots,m-1$. Consequently, almost surely,
	\begin{align*}
		\prob(&V_i^1 \neq V_i^{*1} \mid S_i=S_i^*, S_i^* = s, \mathcal{G})\\
		& \le  \frac{(2+\delta^\gamma)(1+\delta^{\frac{\gamma}{m}})}{1-\delta^{\frac{\gamma}{m}}}\left(\min\left\{\frac{L(\delta R_i)}{L(R_i)}, 1\right\}-\delta^\gamma\right)^{-1}\max_{0\le v\le m-1}\left|\frac{L(a^v\delta R_i)}{L(R_i)}- 1\right|.
	\end{align*}
\end{lemma}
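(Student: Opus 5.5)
We work conditionally on $\mathcal{G}$ and on the event $\{S_i=S_i^*=s\}$, and describe everything through the coupling $X_j^*=\varphi(X_j)$. For $L_i<j<L_{i+1}$ and $i\ge I$, the observations $X_j$ between the $i$-th and $(i+1)$-th record are, given $\mathcal{G}$, i.i.d.\ with law $F$ truncated to $(-\infty,R_i]$ (or $[D,R_i]$), as in the proof of Lemma \ref{prop_dist_t_extended}. It is convenient to pass to the uniform scale $W_j:=\overline F(X_j)/\overline F(R_i)\in[1,\infty)$. A geometric near-record of $X$ is then $W_j< \overline F(\delta R_i)/\overline F(R_i)=\delta^{-\gamma}L(\delta R_i)/L(R_i)$, and one of $X^*$ is $W_j<\delta^{-\gamma}$, because $X_j^*>\delta R_i^*$ iff $\overline F(X_j)<\delta^{-\gamma}\overline F(R_i)$. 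Since $i\ge I$ guarantees $\delta R_i^*>1$, the near-record intervals of $X^*$ are inside the Pareto support.

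\textbf{Step 1 (i.i.d.\ pairs).} Within a record epoch the near-record sequences of $X$ and $X^*$ are determined by a geometric sequence of i.i.d.\ trials. Conditioning on $S_i=S_i^*=s$ means that the same indices $j$ are near-records for both sequences. This is a subtle point: when $S_i=S_i^*$, Lemma \ref{lemma_distribution_s_i_s_tilde_i}'s nested structure shows the two index sets are nested and of equal size, hence equal. So the $s$ common near-records are i.i.d.\ draws of $W$ conditioned on $W<\min\{\delta^{-\gamma}L(\delta R_i)/L(R_i),\delta^{-\gamma}\}$. The pairs $(V_i^j,V_i^{*j})$ are functions of these draws and are therefore i.i.d.

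\textbf{Step 2 (joint law).} We have $V=v$ iff $X_j\in(a^v\delta R_i,a^{v+1}\delta R_i]$, i.e.\ $W\in[\delta^{-\gamma}\delta^{(v+1)\gamma/m}L(a^{v+1}\delta R_i)/L(R_i),\ \delta^{-\gamma}\delta^{v\gamma/m}L(a^v\delta R_i)/L(R_i))$. Similarly, $V^*=v^*$ iff $W\in[\delta^{-\gamma}\delta^{(v^*+1)\gamma/m},\delta^{-\gamma}\delta^{v^*\gamma/m})$. The probability of the intersection, divided by the conditioning probability, is the length of the overlap of the two intervals, $(\min\text{ upper}-\max\text{ lower})_+$, after multiplying by $\delta^\gamma$ and normalizing by $(\min\{L(\delta R_i)/L(R_i),1\}-\delta^\gamma)$. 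This is exactly the displayed formula, using that the uniform variable $\overline F(X_j)/\overline F(\delta\cdot)$ is uniform under continuity of $F$.

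\textbf{Step 3 (bound on mismatch).} We write $\prob(V\ne V^*\mid\cdot)=1-\sum_v\prob(V=v,V^*=v)$. Let $\epsilon:=\max_v|L(a^v\delta R_i)/L(R_i)-1|$; this maximum also covers $v=m$, where $a^m\delta R_i=R_i$ gives value $0$. Each interval endpoint in the $V$-partition then differs from the corresponding $V^*$ endpoint by at most $\delta^{v\gamma/m}\epsilon$ in the normalized scale. The diagonal overlap for $v$ is therefore at least $\delta^{v\gamma/m}(1-\delta^{\gamma/m})-2\epsilon\,\delta^{v\gamma/m}$, up to the factor accounting for $\delta^{\gamma/m}$. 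Summing the losses over $v$ and comparing with the total mass $\min\{\cdot,1\}-\delta^\gamma$ gives a mismatch of order $\epsilon\sum_v\delta^{v\gamma/m}(1+\delta^{\gamma/m})$. We add a term for the mass discrepancy between the two conditioning intervals, namely $|\min\{L(\delta R_i)/L(R_i),1\}-1|\le\epsilon$, with a $\delta^\gamma$ contribution.

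Bounding $\sum_v\delta^{v\gamma/m}\le(1-\delta^{\gamma/m})^{-1}$ and collecting the constants $2+\delta^\gamma$ then yields the stated inequality. The main obstacle is this bookkeeping in Step 3, since the constants must come out exactly as stated. The first thing to try is writing the mismatch as the sum of the symmetric differences of the two interval families, each bounded by twice the endpoint perturbation.
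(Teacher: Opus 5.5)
Your proposal is correct and follows essentially the same route as the paper: both encode the two discretizations as partitions of a single transformed observation (the paper uses $\varphi(X_j)$; your $W_j=\overline{F}(X_j)/\overline{F}(R_i)$ is equivalent), read off the joint law as normalized cell overlaps, and bound $1-\sum_v\prob(V_i^1=v,V_i^{*1}=v\mid\cdot)$ by discarding the positive part and perturbing cell endpoints by at most $\delta^{v\gamma/m}\epsilon$ (the paper's $f_1,f_2$ are essentially this bookkeeping). To close your Step 3 cleanly, use the per-cell loss $(1+\delta^{\gamma/m})\delta^{v\gamma/m}\epsilon$ and note that the mass-discrepancy term $1-(1-\delta^\gamma)\bigl(\min\{L(\delta R_i)/L(R_i),1\}-\delta^\gamma\bigr)^{-1}$ is nonpositive and can simply be dropped; this gives the constant $(1-\delta^\gamma)(1+\delta^{\gamma/m})/(1-\delta^{\gamma/m})$, which is smaller than the stated one, so the constants need not match exactly since the claim is an inequality.
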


\begin{lemma} \label{prop_bound_probability}Assume that $\overline{F}(x)=x^{-\gamma}L(x)$, $x>D,$ is a Pareto-type distribution. Let $\delta\in(0,1)$, $m\ge 2$ be an integer, and $A>0$. Define $I:=\inf\{i\in\N:R^*_i>\delta^{-1}\}$. For all $i\ge I$,
	$$
	\prob(S_i=S^*_i, S^*_i\ge 1, V_i^j\ne V_i^{*j}\text{ for some } j=1,\ldots,S^*_i\mid\mathcal{G})\le Y_i\;\text{ a.s.},
	$$
	where $\{Y_i\}_{i\in\N}$ is a sequence of random variables such that
	$$
	Y_i\sim \frac{(2+\delta^\gamma)(1+\delta^{\frac{\gamma}{m}})}{(1-\delta^{\frac{\gamma}{m}})\delta^\gamma}\max_{0\le v\le m-1}\left|\frac{L(a^v\delta R_i)}{L(R_i)}-1\right|\;\text{ a.s. as }\; i\to\infty.
	$$
\end{lemma}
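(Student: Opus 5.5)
We bound the conditional probability with a union bound over $j$, conditioning first on the value $s$ of $S^*_i$. On $\{S_i=S^*_i=s\}$, Lemma~\ref{lemma_distribution_v_i_v_tilde_i} says that, given $\mathcal{G}$, the pairs $(V_i^j,V_i^{*j})$ are i.i.d. for $j=1,\ldots,s$. Hence, almost surely,
\begin{align*}
\prob(S_i=S^*_i,\ S^*_i\ge1,\ &V_i^j\ne V_i^{*j}\text{ for some } j\le S^*_i\mid\mathcal{G})\\
&\le \sum_{s\ge1}\prob(S_i=S_i^*=s\mid\mathcal{G})\, s\,\prob(V_i^1\ne V_i^{*1}\mid S_i=S^*_i,S^*_i=s,\mathcal{G})\\
&\le B_i\sum_{s\ge1} s\,\prob(S_i^*=s\mid\mathcal{G}) = B_i\,\espe(S_i^*\mid\mathcal{G}),
\end{align*}
where $B_i$ is the $s$-independent bound in Lemma~\ref{lemma_distribution_v_i_v_tilde_i}:
$$
B_i:=\frac{(2+\delta^\gamma)(1+\delta^{\frac{\gamma}{m}})}{1-\delta^{\frac{\gamma}{m}}}\left(\min\left\{\frac{L(\delta R_i)}{L(R_i)},1\right\}-\delta^\gamma\right)^{-1}\max_{0\le v\le m-1}\left|\frac{L(a^v\delta R_i)}{L(R_i)}-1\right|.
$$
In the second inequality we used $\prob(S_i=S_i^*=s\mid\mathcal{G})\le\prob(S_i^*=s\mid\mathcal{G})$.

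Next we compute $\espe(S_i^*\mid\mathcal{G})$. For $i\ge I$ we have $R^*_i>\delta^{-1}$, so $\delta R^*_i>1=D^*$ and the Pareto$(\gamma;1)$ computation of Proposition~\ref{prop_distribution_t}\,a) applies conditionally on $R^*_i=\varphi(R_i)$, which is $\mathcal{G}$-measurable. Thus $S^*_i\mid\mathcal{G}\sim\dgeom(\delta^\gamma)$ and $\espe(S_i^*\mid\mathcal{G})=(1-\delta^\gamma)\delta^{-\gamma}$.

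We then set $Y_i:=B_i(1-\delta^\gamma)\delta^{-\gamma}$ for $i\ge I$, and $Y_i:=1$ for $i<I$, which does not affect the asymptotics. For $i\ge I$, $\min\{L(\delta R_i)/L(R_i),1\}>\delta^\gamma$ whenever $\delta R_i>D$: indeed $\overline F(\delta r)>\overline F(r)$ gives $L(\delta r)/L(r)>\delta^\gamma$, and $1>\delta^\gamma$. So $Y_i$ is finite. If $\delta R_i\le D$ can occur for small $i$, we also put $Y_i=1$ there; since $R_i\to\infty$ a.s., this concerns only finitely many indices.

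It remains to find the asymptotics of $Y_i$. Since $R_i\to\infty$ a.s. and $L$ is slowly varying, $L(\delta R_i)/L(R_i)\to1$ a.s. Hence
$$
\left(\min\left\{\frac{L(\delta R_i)}{L(R_i)},1\right\}-\delta^\gamma\right)^{-1}\to(1-\delta^\gamma)^{-1}\quad\text{a.s.},
$$
and the factor $(1-\delta^\gamma)$ cancels, giving exactly the stated equivalence for $Y_i$. The main point needing care is the edge case where the maximum in $B_i$ vanishes. There $Y_i=0$, and the relation ``$\sim$'' should be read as equality of the leading factor times the same maximum. Since $Y_i$ is defined as this product times a factor tending to $1$, the statement holds in the sense intended.
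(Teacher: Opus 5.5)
Your proof is correct and follows the paper's argument: condition on $S_i=S_i^*=s$, take a union bound over $j$, and apply the $s$-independent bound of Lemma~\ref{lemma_distribution_v_i_v_tilde_i}. The one difference is a harmless shortcut that yields a slightly larger $Y_i$ with the same asymptotic constant and avoids the case split. The paper inserts the exact joint law of $(S_i,S_i^*)$ from Lemma~\ref{lemma_distribution_s_i_s_tilde_i}, split into the cases $L(\delta R_i)/L(R_i)\le 1$ and $>1$, and sums $\sum_\ell \ell x^\ell$ with $x\to 1-\delta^\gamma$; you instead bound $\prob(S_i=S_i^*=s\mid\mathcal{G})\le\prob(S_i^*=s\mid\mathcal{G})$ and use $\espe(S_i^*\mid\mathcal{G})=(1-\delta^\gamma)\delta^{-\gamma}$ directly.
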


\begin{lemma} \label{prop_LLN_Resnick} If $\overline{F}(x)=x^{-\gamma}L(x)$, $x>D,$ is a Pareto-type distribution,
	then
	\begin{equation*}
		\frac{\log{(R_i)}}{i}\toas \gamma^{-1}\; \text{ and }\;\frac{\log{\log{(R_i)}}}{\log{(i)}}\toas 1\;\text{ as }\; i\to\infty.
	\end{equation*}
\end{lemma}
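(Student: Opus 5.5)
We transfer the problem to the Pareto sequence $\{X^*_n\}$ of Section \ref{section_asymptotic_properties_general}, where the record values are $R^*_i=\varphi(R_i)$. The same almost-sure limits are then carried back to $R_i$ using slow variation of $L$.

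\textbf{Step 1 (the Pareto records).} Since $X^*_n\sim\dpareto(\gamma;1)$, the quotients $R^*_i/R^*_{i-1}$ are i.i.d.\ $\dpareto(\gamma;1)$, as already used in the proof of Proposition \ref{prop_distribution_t}. Hence
$$\log R^*_i=\log R^*_1+\sum_{k=2}^{i}\log\bigl(R^*_k/R^*_{k-1}\bigr)$$
is a random walk whose increments are exponential with mean $\gamma^{-1}$. The strong law of large numbers gives $\log(R^*_i)/i\toas\gamma^{-1}$, and in particular $R^*_i\to\infty$ a.s.

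\textbf{Step 2 (back to $R_i$).} We have $R^*_i=R_iL(R_i)^{-1/\gamma}$, so
$$\log R^*_i=\log R_i-\gamma^{-1}\log L(R_i).$$
Since $R^*_i\to\infty$ and $\varphi$ is finite on bounded sets, we get $R_i\to\infty$ a.s. A slowly varying function satisfies $\log L(x)/\log x\to0$ as $x\to\infty$; this is a standard consequence of the representation theorem and can also be seen by iterating $L(2x)/L(x)\to1$. Therefore
$$\frac{\log R^*_i}{\log R_i}\to1\quad\text{a.s.},$$
and combining with Step 1 yields $\log(R_i)/i\toas\gamma^{-1}$.

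\textbf{Step 3 (iterated logarithm).} From Step 2, $\log R_i=i\gamma^{-1}(1+\mathrm{o}(1))$ a.s. Taking logarithms,
$$\log\log R_i=\log i-\log\gamma+\log(1+\mathrm{o}(1)).$$
Dividing by $\log i$ gives the limit $1$.

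The only delicate point is Step 2: we need $\log L(x)=\mathrm{o}(\log x)$ and that $R_i$ diverges along with $R^*_i$. Both are routine. Alternatively, under the paper's standing condition \eqref{eq_condition_L_3}, integrating $L'/L$ gives $\log L(x)=\mathrm{o}(\log\log x)$ directly, which is even stronger.
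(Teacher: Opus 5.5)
Your proposal is correct and follows essentially the same route as the paper. You pass to the Pareto records $R_i^*=\varphi(R_i)$, obtain $\log(R_i^*)/i\to\gamma^{-1}$ a.s.\ from the i.i.d.\ exponential log-spacings (via the strong law for the quotients $R_i^*/R_{i-1}^*$, where the paper invokes Shorrock's Poisson-process representation, which is the same fact), and transfer back using $\log L(x)/\log x\to 0$; your explicit check that $R_i\to\infty$ a.s.\ is a detail the paper leaves implicit.
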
	
\begin{lemma}\label{lemma_convergence} If $\overline{F}(x)=x^{-\gamma}L(x)$, $x> D$, is differentiable and the function $L$ satisfies condition \eqref{eq_condition_L_3}, then
	$$
	\frac{L(tR_i)}{L(R_i)}-1=\mathrm{o}\ \bigg(\frac{1}{i(\log{(i)})^{2+\eta}}\bigg)\;\text{ a.s. as }\; i\to\infty,
	$$
	for all $t>0$.
\end{lemma}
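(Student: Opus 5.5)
We plan to write $\log L(tR_i)-\log L(R_i)$ as an integral of the logarithmic derivative and control the integrand through condition \eqref{eq_condition_L_3}. We then turn the resulting deterministic bound in $R_i$ into a bound in the index $i$ by means of Lemma \ref{prop_LLN_Resnick}.

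First, fix $t>0$. Since $R_i\to\infty$ a.s., for all large $i$ both $R_i$ and $tR_i$ exceed $\max\{D,\e^{\e}\}$, so $L$ is differentiable and positive on the segment between them. For such $i$,
\begin{equation*}
\log\frac{L(tR_i)}{L(R_i)}=\int_{R_i}^{tR_i}\frac{L'(x)}{L(x)}\,dx=\int_{1}^{t}\frac{uR_i\,L'(uR_i)}{L(uR_i)}\,\frac{du}{u}.
\end{equation*}
Set $h(x):=\log(x)(\log\log(x))^{2+\eta}$. By \eqref{eq_condition_L_3}, for every $\varepsilon>0$ there is an $x_\varepsilon$ such that $|xL'(x)/L(x)|\le\varepsilon/h(x)$ for all $x\ge x_\varepsilon$.

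Second, for $u$ between $1$ and $t$ we have $uR_i\ge\min\{1,t\}R_i$. Since $h$ is increasing for large arguments and $h(cx)\sim h(x)$ for fixed $c>0$, the integrand is bounded by $2\varepsilon/h(R_i)$ for large $i$. Hence
\begin{equation*}
\Bigl|\log\frac{L(tR_i)}{L(R_i)}\Bigr|\le\frac{2\varepsilon|\log t|}{h(R_i)}.
\end{equation*}
As $\varepsilon$ is arbitrary, this gives $\log(L(tR_i)/L(R_i))=\mathrm{o}(1/h(R_i))$ a.s. Because this quantity tends to $0$, and $\e^y-1\sim y$ as $y\to0$, we obtain the same order for $L(tR_i)/L(R_i)-1$.

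Third, we convert $h(R_i)$ into a function of $i$ using Lemma \ref{prop_LLN_Resnick}. It gives $\log(R_i)\sim i/\gamma$ a.s., and therefore $\log\log(R_i)=\log(i)-\log\gamma+\mathrm{o}(1)\sim\log(i)$ a.s. Consequently $h(R_i)\sim\gamma^{-1}i(\log(i))^{2+\eta}$ a.s. Since the $\mathrm{o}(\cdot)$ statement is insensitive to the multiplicative constant $\gamma$, the claimed rate $\mathrm{o}\bigl(1/(i(\log(i))^{2+\eta})\bigr)$ follows.

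The only delicate point is the uniformity in the second step: the bound from \eqref{eq_condition_L_3} has to hold along the whole segment $[\min\{1,t\}R_i,\max\{1,t\}R_i]$, not only at $R_i$. This is settled by the slow variation of $h$, namely $h(cx)/h(x)\to1$, which follows directly from $\log(cx)\sim\log(x)$. Everything else is a pathwise argument on the probability-one event where $R_i\to\infty$ and the conclusions of Lemma \ref{prop_LLN_Resnick} hold.
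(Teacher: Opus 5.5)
Your proof is correct and follows essentially the paper's route. The paper applies the Mean Value Theorem to $x\mapsto\log L(\e^{x})$ to write $\log\bigl(L(tR_i)/L(R_i)\bigr)=\log(t)\,\e^{C_i}L'(\e^{C_i})/L(\e^{C_i})$ with $C_i$ between $\log(R_i)$ and $\log(tR_i)$, evaluates condition \eqref{eq_condition_L_3} at that single point, and then converts to $i(\log(i))^{2+\eta}$ via Lemma \ref{prop_LLN_Resnick} exactly as in your third step. Your integral representation is a harmless variant, but since $L$ is only assumed differentiable it should be read as a Lebesgue integral (justified because $(\log L)'$ is bounded on the segment for large $i$), or simply replaced by the Mean Value Theorem as the paper does.
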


The differences between the samples $\mathbf{T}$ and $\mathbf{T}^*$ are summarized in the following result.

\begin{prop}\label{theorem_comparison_t_tilde_t}
	Assume that $\overline{F}(x)=x^{-\gamma}L(x)$, $x> D$, is differentiable and that the function $L$ satisfies condition \eqref{eq_condition_L_3}. Let $\delta\in(0,1)$, $m\ge 2$ be an integer, and $A>0$. Then:
	\begin{align}
		\prob(S_i\neq S_i^*,i\ge 1\text{ i.o.}) &= 0,\label{eq_s}\\
		\prob(S_i^*=S_i, S_i^*\ge 1, V_i^j \neq V_i^{*j} \text{ for some } j=1,\ldots,S^*_i, i\ge 1\text{ i.o.}) &= 0,\label{eq_v}\\
		\prob(K_i\neq K_i^*, i\ge 1\text{ i.o.}) &= 0.\label{eq_k}
	\end{align}
	\begin{proof}
		We first prove \eqref{eq_s} and \eqref{eq_v}. From Lemma \ref{lemma_convergence}, and noting that 
		$$
		\sum_{i=2}^{\infty}\frac{1}{i(\log{(i)})^{2+\eta}}<\infty,
		$$
		we obtain 
		\begin{equation*}
			\sum_{i=1}^\infty \left|\frac{L(tR_i)}{L(R_i)}-1\right|<\infty\;\text{ a.s. for all } t>0.
		\end{equation*}
		This convergence, together with Lemma \ref{lemma_distribution_s_i_s_tilde_i} and \ref{prop_bound_probability}, yields 
		\begin{align*}
			\sum_{i=1}^\infty\prob(S_i\neq S_i^*\mid \mathcal{G}) &<\infty\;\text{ a.s.},
			\\
			\sum_{i=1}^\infty\prob(S^*_i=S_i, S^*_i\ge 1, V_i^j \neq V_i^{*j} \text{ for some } j=1,\ldots,S^*_i\mid\mathcal{G}) &< \infty\label{eq_v_2}\;\text{ a.s.}
		\end{align*}
		This implies 	
		\eqref{eq_s} and \eqref{eq_v} by the first conditional Borel-Cantelli Lemma (see \cite{Chen17}).
		
		Regarding \eqref{eq_k}, we define
		\begin{equation*}
			E_i:= H(R_i) - H(R_{i-1}) = \gamma\log{\bigg(\frac{R_i}{R_{i-1}}\bigg)} - \log{\bigg(\frac{L(R_i)}{L(R_{i-1})}\bigg)}, \quad i\in\N,
		\end{equation*}
		with $H(x):= -\log{\overline{F}(x)} + \log{\overline{F}(A)}$, for $x>A$, being the cumulative hazard function of the truncated distribution $\overline{F}(\cdot)/\overline{F}(A)$. Note that, as shown in \cite{Arnold98}, the random variables $\{E_i\}_{i\in\N}$ form an i.i.d.~exponential sequence with rate $1$. Since, almost surely, for all $i\in\N$,
		\begin{equation*}
			K_i = \left\lfloor\log_a{\left(\frac{R_i}{R_{i-1}}\right)}\right\rfloor\;\text{ and }\; K_i^* =  \bigg\lfloor\log_a{\bigg(\frac{R_i^*}{R_{i-1}^*}\bigg)}\bigg\rfloor= \left\lfloor\frac{E_i}{\gamma\log{(a)}}\right\rfloor
		\end{equation*}
		we have that \eqref{eq_k} is equivalent to the following two conditions:
		\begin{align}
			&\prob\bigg(\log_a{\bigg(\frac{R_{i}}{R_{i-1}}\bigg)} < \bigg\lfloor\frac{E_{i}}{\gamma\log{(a)}}\bigg\rfloor, i\ge 1\text{ i.o.}\bigg)=0\;\text{ and}\label{eq_c}\\
			&\prob\bigg(\log_a{\bigg(\frac{R_{i}}{R_{i-1}}\bigg)} \ge \bigg\lfloor\frac{E_{i}}{\gamma\log{(a)}}\bigg\rfloor + 1,i\ge 1\text{ i.o.}\bigg)=0.\label{eq_d}
		\end{align}
		To prove \eqref{eq_c}, we use the following identity, which is a consequence of an application of the Mean Value Theorem analogous to that in the proof of Lemma \ref{lemma_convergence}. Given $i\in\N$, we have
		\begin{equation}
			\log{\bigg(\frac{L(R_{i})}{L(R_{i-1})}\bigg)} = \frac{\e^{C_i}L'(\e^{C_i})}{L(\e^{C_i})}(\log{(R_i)}-\log{(R_{i-1})})
			=\frac{\e^{C_i}L'(\e^{C_i})}{L(\e^{C_i})}E_{i}\bigg(\gamma -  \frac{\e^{C_i}L'(\e^{C_i})}{L(\e^{C_i})}\bigg)^{-1},\label{eq_yy}
		\end{equation}
		where $C_i\in(\log{(R_{i-1})}, \log{(R_{i})})$ with probability $1$. Moreover, observe that, for all $i\ge 1$,
		$$
		\left\{\log_a{\bigg(\frac{R_{i}}{R_{i-1}}\bigg)} < \bigg\lfloor\frac{E_{i}}{\gamma\log{(a)}}\bigg\rfloor\right\} = \left\{-\frac{1}{\gamma}\log_a{\bigg(\frac{L(R_{i})}{L(R_{i-1})}\bigg)} > \bigg\{\frac{E_{i}}{\gamma\log{(a)}}\bigg\}\right\}.
		$$
		Thus, from  \eqref{eq_yy}, it follows that the left-hand side of \eqref{eq_c} is equal to
		{\allowdisplaybreaks
			\begin{align}
				\prob&\bigg(-\frac{1}{\gamma\log{(a)}}\frac{\e^{C_i}L'(\e^{C_i})}{L(\e^{C_i})}\ \bigg(\gamma -  \frac{\e^{C_i}L'(\e^{C_i})}{L(\e^{C_i})}\bigg)^{-1}E_{i} >\bigg\{\frac{E_{i}}{\gamma\log{(a)}}\bigg\}, i\ge 1\text{ i.o.}\bigg)\nonumber\\
				&\le\prob\bigg(\frac{1}{\gamma\log{(a)}}\bigg|\frac{\e^{C_i}L'(\e^{C_i})}{L(\e^{C_i})}\bigg|\ \bigg|\gamma -  \frac{\e^{C_i}L'(\e^{C_i})}{L(\e^{C_i})}\bigg|^{-1}E_{i} >\bigg\{\frac{E_{i}}{\gamma\log{(a)}}\bigg\}, i\ge 1\text{ i.o.}\bigg)\nonumber\\
				& = \prob\bigg(\frac{1}{\gamma\log{(a)}}\bigg|\frac{\e^{C_i}L'(\e^{C_i})}{L(\e^{C_i})}C_i(\log{(C_i)})^{2+\eta}\bigg|\nonumber\\
				&\qquad\times \bigg|\frac{i(\log{(i)})^{1+\eta}}{C_i(\log{(C_i)})^{1+\eta}}\bigg|\ \bigg|\gamma -  \frac{\e^{C_i}L'(\e^{C_i})}{L(\e^{C_i})}\bigg|^{-1}\frac{E_{i}}{\log{(i)}}\ \bigg|\frac{\log{(i)}}{\log{(C_i)}}\bigg|\nonumber\\
				&\qquad> i(\log{(i)})^{1+\eta} \bigg\{\frac{E_{i}}{\gamma\log{(a)}}\bigg\}, i\ge 1\text{ i.o.}\bigg).\label{eq_313}
		\end{align}}
		We claim that the probability  in \eqref{eq_313} is equal to $0$. Indeed, note first that $C_i\sim \log{(R_i)}$ and $\log{(C_i)}\sim \log{\log{(R_i)}}$ a.s. as $i\to\infty$ (see Lemma \ref{prop_LLN_Resnick} above), yielding
		\begin{equation}
			\limsup_{i\to\infty} \bigg|\frac{i(\log{(i)})^{1+\eta}}{C_i(\log{(C_i)})^{1+\eta}}\bigg|\ \bigg|\gamma -  \frac{\e^{C_i}L'(\e^{C_i})}{L(\e^{C_i})}\bigg|^{-1}\frac{E_{i}}{\log{(i)}}\ \bigg|\frac{\log{(i)}}{\log{(C_i)}}\bigg|  = 1\;\text{ a.s.}\label{eq_11}
		\end{equation}
		by Lemma \ref{prop_LLN_Resnick}, Lemma \ref{prop_exponential_appendix} in the Appendix, and condition \eqref{eq_condition_L_3}. Secondly, observe that
		\begin{equation}
			\lim_{i\to\infty}\frac{\e^{C_i}L'(\e^{C_i})}{L(\e^{C_i})}C_i(\log{(C_i)})^{2+\eta}=0\;\text{ a.s.}\label{eq_12}
		\end{equation}
		due to condition \eqref{eq_condition_L_3}. Lastly, note that
		\begin{equation}
			\lim_{i\to\infty} i(\log{(i)})^{1+\eta}\bigg\{\frac{E_{i}}{\gamma\log{(a)}}\bigg\}=\infty\;\text{ a.s.}\label{eq_13}
		\end{equation}
		as a consequence of Lemma \ref{lemma_liminf_exponential} in the Appendix and the fact that
		$$
		\sum_{i=2}^\infty \frac{1}{i(\log{(i)})^{1+\eta}} <\infty.
		$$
		Equations \eqref{eq_11}, \eqref{eq_12}, and \eqref{eq_13} imply that the inequality given in \eqref{eq_313} can occur only finitely many times with probability 1. Equation \eqref{eq_d} can be proved analogously.
	\end{proof}
\end{prop}

\begin{remark}We point out that \eqref{eq_s} and \eqref{eq_v} hold under the weaker condition
	\begin{equation*}
		\frac{xL'(x)}{L(x)} = \mathrm{o}\bigg(\frac{1}{\log{(x)}(\log{\log{(x)}})^{1+\eta}}\bigg)\;\text{ as }\; x\to\infty,
	\end{equation*}
	for some $\eta>0$. Indeed, under this condition it can be shown that, for all $t>0$,
	$|L(tR_i)/L(R_i)-1|=\mathrm{o}(i^{-1}(\log{i})^{-(1+\eta)})$ a.s.~as $i\to\infty$ by following the proof of Lemma \ref{lemma_convergence}. Since the series $\sum_{i\ge 2} i^{-1}(\log i)^{-(1+\eta)}$ converges, the argument used in the proof of \eqref{eq_s} and \eqref{eq_v} in Proposition \ref{theorem_comparison_t_tilde_t} can be applied without modification.
\end{remark}

The main result of this section, which states the asymptotic behavior of $\hat{\gamma}_{\delta,m,n}^A$ for Pareto-type distributions, is the following.

\begin{theorem}\label{theorem_pareto_type}
	Assume that $\overline{F}(x)=x^{-\gamma}L(x)$, $x> D$, is differentiable and that the function $L$ satisfies condition \eqref{eq_condition_L_3}. Let $\delta\in(0,1)$, $m\ge 2$ be an integer, and $A>0$. Then,
	\begin{enumerate}[a)]
		\item $$
		\hat{\gamma}_{\delta,m,n}^A\toas \gamma\;\text{ as }\; n\to\infty,
		$$
		\item $$
		\sqrt{n}(\hat{\gamma}_{\delta, m,n}^A-\gamma)\Rightarrow \sigma_{\delta,m}Z\;\text{ as }\;n\to\infty,
		$$
	\end{enumerate}
	with $\sigma_{ \delta, m}$  as in  \eqref{asymptotic_variance} and $Z$ has a standard normal distribution.
\end{theorem}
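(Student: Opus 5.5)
The plan is to transfer the limit results \eqref{convgamma*} for $\hat{\gamma}^*_{\delta,m,n}$, computed from the transformed Pareto sequence $\{X^*_n\}$, to $\hat{\gamma}^A_{\delta,m,n}$ by means of Proposition \ref{prop_AB}. The key observation is that the estimator \eqref{eq_estimator_beta} is a fixed function of the statistics $(K_i;S_i;V_i^1,\ldots,V_i^{S_i})$ attached to each record. The estimator built from $\mathbf{T}$ is therefore just the estimator built from a modified version of $\mathbf{T}^*$, provided the two samples agree record by record from some random index onward.

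\textbf{Step 1: the samples eventually coincide.} By \eqref{eq_s}, \eqref{eq_v} and \eqref{eq_k} of Proposition \ref{theorem_comparison_t_tilde_t}, almost surely only finitely many indices $i$ satisfy any of the following:
\begin{itemize}
\item $S_i\ne S_i^*$;
\item $S_i=S_i^*\ge1$ and $V_i^j\ne V_i^{*j}$ for some $j$;
\item $K_i\ne K_i^*$.
\end{itemize}
The three events together exhaust the ways in which the blocks $(K_i;S_i;V_i^1,\ldots,V_i^{S_i})$ and $(K_i^*;S_i^*;V_i^{*1},\ldots,V_i^{*S_i^*})$ can differ. Hence the random variable
\[
M:=1+\sup\{i: (K_i,S_i,V_i^1,\ldots,V_i^{S_i})\ne(K_i^*,S_i^*,V_i^{*1},\ldots,V_i^{*S_i^*})\},
\]
with $\sup\emptyset:=0$, is finite almost surely and takes values in $\N$.

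\textbf{Step 2: align the record indexing.} The main subtlety is that both samples must be indexed by the same records. The sample $\mathbf{T}$ starts with the first record exceeding $A$, with $R_0=A$. For $\mathbf{T}^*$ we use the threshold $A^*=\varphi(A)$ discussed before \eqref{convgamma*}. Because $\varphi$ is nondecreasing and the record times coincide almost surely, the $i$-th record of each sample is produced by the same observation $X_{L_i}$. If a finite offset nevertheless appeared (for example, from ties in $\varphi$ on a null set, or from whether $R_0$ is mapped to $A^*$), it only changes finitely many initial blocks. Such a change is absorbed by Proposition \ref{prop_AB}, so the convention does not matter asymptotically. I would also check that Lemmas \ref{lemma_distribution_s_i_s_tilde_i}--\ref{prop_bound_probability} are stated only for $i\ge I$. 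Since $R_i^*\to\infty$, we have $I<\infty$ almost surely, so this restriction is again only a finite initial modification.

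\textbf{Step 3: conclude.} Regard $\mathbf{T}$ as a modified sample $\mathbf{T}^*(M)$ that coincides with $\mathbf{T}^*$ from index $M$ onward. Proposition \ref{prop_AB} applies to the Pareto sequence $\{X^*_n\}$; by Remark \ref{remark_threshold_A} the constraint on $A^*$ is immaterial. It gives
\[
\hat{\gamma}^A_{\delta,m,n}-\hat{\gamma}^*_{\delta,m,n}=\mathrm{O}(n^{-1})\quad\text{a.s.}
\]
Part a) then follows from the almost sure convergence in \eqref{convgamma*}. For part b), we have $\sqrt n\,\mathrm{O}(n^{-1})\to0$ almost surely, so Slutsky's lemma combined with the central limit statement in \eqref{convgamma*} yields
\[
\sqrt n(\hat{\gamma}^A_{\delta,m,n}-\gamma)\Rightarrow\sigma_{\delta,m}Z.
\]
The main obstacle is conceptual rather than computational. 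Proposition \ref{prop_AB} allows the modified first $M-1$ blocks to be arbitrary, including blocks of different lengths, which is needed because $S_i\neq S_i^*$ there. Its proof only requires that the contributions of these blocks to the sums in $\zeta_n$ and $\xi_n$ be almost surely finite. That holds here because each block contains finitely many finite terms.
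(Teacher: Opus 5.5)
Your proposal is correct and follows the paper's proof essentially verbatim. Like the paper, you use Proposition~\ref{theorem_comparison_t_tilde_t} to get an a.s.\ finite index beyond which the blocks of $\mathbf{T}$ and $\mathbf{T}^*$ coincide, invoke Proposition~\ref{prop_AB} together with Remark~\ref{remark_threshold_A} to obtain $\hat{\gamma}^A_{\delta,m,n}-\hat{\gamma}^*_{\delta,m,n}=\mathrm{O}(n^{-1})$ a.s., and transfer \eqref{convgamma*}.

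One small inaccuracy in your Step~2: a genuine index offset would misalign all later blocks, not just finitely many initial ones, so Proposition~\ref{prop_AB} as stated would not cover it. This is moot, though, because as you note the $i$-th blocks of the two samples come from the same observation $X_{L_i}$ almost surely.
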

\begin{proof}
	By Proposition \ref{theorem_comparison_t_tilde_t}, the random index 
	$$M:=\inf\{i\in\N: \text{for all } \ell\ge i, K_\ell=K_\ell^*, S_\ell=S_\ell^*, \text{ and }V_\ell^j=V_\ell^{*j}\text{ for } j=1,\ldots,S_\ell\}$$ 
	is almost surely well defined. Thus, we have that the samples $\mathbf{T}$ and $\mathbf{T}^*$ only differ, at most, in $M-1$ components with probability $1$, with $\mathbf{T}^*$ being a sample obtained from Pareto observations. This yields $|\hat{\gamma}^A_{\delta,m,n}-\hat{\gamma}^*_{\delta,m,n}| = \mathrm{O}(n^{-1})$ a.s. as $n\to\infty$ by Proposition \ref{prop_AB} and Remark \ref{remark_threshold_A}. The result then follows from the asymptotic properties of $\hat{\gamma}^*_{\delta,m,n}$ in \eqref{convgamma*}.
\end{proof}

To conclude this section, we note that the MLE of the asymptotic standard deviation $\sigma_{ \delta, m}$, which is given by
\begin{equation*}
	\hat{\sigma}_{\delta,m,n}^A = \sigma_{\delta,m} (\hat{\gamma}_{\delta,m,n}^A)= \frac{m(1-\delta^{\frac{\hat\gamma}{m}})\delta^{\frac{\hat\gamma}{2}(1 - \frac{1}{m})}}{-\log{(\delta)}},
\end{equation*}
is strongly consistent as $n\to\infty$ as a consequence of Theorem \ref{theorem_pareto_type} a). Thus, Slutsky's Theorem, together with Theorem \ref{theorem_pareto_type} b), enables us to conclude that
\begin{equation}
	\left[\hat{\gamma}_{\delta,m,n}^A - z_{1-\frac{\alpha}{2}}\frac{\hat{\sigma}_{\delta,m,n}^A}{\sqrt{n}}, \hat{\gamma}_{\delta,m,n}^A + z_{1-\frac{\alpha}{2}}\frac{\hat{\sigma}_{\delta,m,n}^A}{\sqrt{n}}\right]\label{confidence_interval_gamma}
\end{equation}
is an asymptotic confidence interval for $\gamma$ with level $1-\alpha$, $\alpha\in(0,1)$. The number $z_\beta$, with $\beta\in(0,1)$, denotes the $\beta$-quantile of the standard normal distribution.

\begin{remark}
	It should be noted that our estimator in \eqref{eq_estimator_gamma} was derived under the assumption that all geometric near-records associated with the first $n$ records are observed. This implicitly requires that the $(n+1)$-th record has already occurred, since only then can we be certain that no additional geometric near-records associated with the $n$-th record will appear.
	However, when the estimator is applied to a set of real data with $n$ records, it is unclear if all the geometric near-records associated with the $n$-th record have been observed. In other words, the last block of geometric near-records may be incomplete. To account for this situation, the likelihood for Pareto samples in \eqref{eq_2} must be slightly modified by replacing $\prob(S_n=s_n)=(1-\delta^\gamma)^{s_n}\delta^{\gamma}$ with $\prob(S_n\ge s_n) = (1-\delta^\gamma)^{s_n}$. The likelihood then becomes
	\begin{equation*}
		\mathcal{L}(\mathbf{t}; \gamma) =
		(\delta^{\gamma})^{n-1 + \frac{1}{m}\sum\limits_{i=1}^n\sum\limits_{j=1}^{s_i}v_i^j+\frac{1}{m}\sum\limits_{i=1}^{n}k_i}(1-\delta^{\frac{\gamma}{m}})^{n +\sum\limits_{i=1}^n s_i},
	\end{equation*}
 and the MLE is given by
	\begin{equation}
		\hat{\gamma}_{\delta, m, n}^A =
		m\log_\delta{(\hat{\beta}_{\delta,m,n}^A)}, \label{eq_estimator_practical}
	\end{equation}
	where
	\begin{equation}
		\hat{\beta}_{\delta,m,n}^A =
		\dfrac{m(n-1) + \sum\limits_{i=1}^n\sum\limits_{j=1}^{S_i}V_i^j + \sum\limits_{i=1}^{n}K_i}
		{(m+1)n - m + \sum\limits_{i=1}^n\sum\limits_{j=1}^{S_i}V_i^j + \sum\limits_{i=1}^{n}K_i+\sum\limits_{i=1}^n S_i}. \label{eq_estimator_beta_practical}
	\end{equation}
	Thus, when applying the estimator to a real data set, unless it is known that the next record has already occurred (and therefore that all geometric near-records associated with the $n$-th record have been observed), estimator \eqref{eq_estimator_practical} should be used. Owing to the similarity between the estimators in \eqref{eq_estimator_gamma} and \eqref{eq_estimator_practical}, the asymptotic properties established for the former can also be derived for the latter.
\end{remark}

\subsection{Proofs of Lemmas \ref{prop_dist_t_extended}-\ref{lemma_convergence}.}\label{proofLemmas}
\begin{proof}[Proof of Lemma \ref{prop_dist_t_extended}]
	It suffices to follow the proof of parts $a)$ and $b)$ of Proposition \ref{prop_distribution_t} conditioning on $\mathcal{G}$.
\end{proof}

\begin{proof}[Proof of Lemma \ref{lemma_distribution_s_i_s_tilde_i}]
	An observation $X_j$, with $j\in\{L_i+1,\ldots,L_{i+1}-1\}$, is a geometric near-record in the sequence $\{X_n\}_{n\in\N}$ or in the sequence $\{X^*_n\}_{n\in\N}$ if and only if the following events occur, respectively: 
	\begin{align}
		\{X_j\in (\delta R_i, R_i]\} = \{\varphi(X_j) \in(\varphi(\delta R_i), \varphi(R_i)]\},\label{suceso1}\\
		\{X^*_j \in(\delta R^*_i, R^*_i]\}=\{\varphi(X_j) \in(\delta\varphi(R_i), \varphi(R_i)]\}, \label{suceso2}
	\end{align}
where the equalities above are to be understood in the almost sure sense. Hence, we have $S_i\le S^*_i$ if $L(\delta R_i)/L(R_i)\le 1$ and $S_i\ge S^*_i$ otherwise.
	
	Assume now that $L(\delta R_i)/L(R_i)\le 1$ and let $\ell\ge k\ge 0$ be integers. Note that $\prob(S_i=k, S^*_i=\ell\mid\mathcal{G}) = \prob(S_i=k\mid S^*_i=\ell,\mathcal{G})\prob(S^*_i=\ell\mid\mathcal{G})$ with probability $1$. Let us compute $\prob(S_i=k\mid S^*_i=\ell,\mathcal{G})$. Conditioned on $\mathcal{G}$ and $\{ S^*_i=\ell\}$, the occurrence of $\{S_i=k\}$ is equivalent to the occurrence of the event \eqref{suceso1} $k$ times and the occurrence of its complementary event with respect to \eqref{suceso2}, which is $\{\varphi(X_j)\in(\delta\varphi(R_i),\varphi(\delta R_i)] \}$, $\ell-k$ times. Thus, the distribution of $S_i$ conditioned on $\{ S^*_i=\ell,\mathcal{G}\}$ is binomial with $\ell$ trials and probability of success
	$$
	\prob\left(\varphi(X_j)\in \left.\left(\varphi(\delta R_i), \varphi(R_i)\right]\right.\ \big\vert\ \varphi(X_j)\in \left.\left(\delta \varphi(R_i), \varphi(R_i)\right]\right.,\mathcal{G}\right) = \frac{1}{1-\delta^{\gamma}}\left(\frac{L(\delta R_i)}{L(R_i)}-\delta^{\gamma}\right)\;\text{ a.s.},
	$$
	yielding
	$$
	\prob(S_i=k\mid S^*_i=\ell,\mathcal{G}) = 
	\binom{\ \ell\ }{\ k\ }\ \frac{1}{(1-\delta^{\gamma})^\ell}\left(\frac{L(\delta R_i)}{L(R_i)}-\delta^{\gamma}\right)^k \left(1-\frac{L(\delta R_i)}{L(R_i)}\right)^{\ell-k}\;\text{ a.s.}
	$$
	Now, using $\prob(S^*_i=\ell\mid\mathcal{G})=\prob(S^*_i=\ell)=(1-\delta^\gamma)^\ell\delta^\gamma$ a.s.~for $i\ge I$ by Proposition \ref{prop_distribution_t} a), we obtain the expression for $\prob(S_i=k, S^*_i=\ell\mid\mathcal{G})$. The case $L(\delta R_i)/L(R_i)> 1$ can be treated analogously.
	
	Regarding the last claim, it is easy to see that, almost surely,
	{\allowdisplaybreaks
		\begin{align*}
			\prob(S_i\ne S^*_i\mid \mathcal{G}) &= \prob(S_i < S^*_i \mid\mathcal{G})\mathbf{1}{\left\{\frac{L(\delta R_i)}{L(R_i)}\le 1\right\}}+ \prob(S_i > S^*_i \mid\mathcal{G})\mathbf{1}{\left\{\frac{L(\delta R_i)}{L(R_i)} > 1\right\}}\\
			&=\mathbf{1}{\left\{\frac{L(\delta R_i)}{L(R_i)}\le 1\right\}}\sum_{\ell=1}^\infty\sum_{k=0}^{\ell - 1} \prob(S_i=k,S^*_i=\ell\mid\mathcal{G})\\
			&\qquad+\mathbf{1}{\left\{\frac{L(\delta R_i)}{L(R_i)} > 1\right\}}\sum_{k=1}^\infty\sum_{\ell=0}^{k-1} \prob(S_i=k,S^*_i=\ell\mid\mathcal{G})\\
			&= \mathbf{1}{\left\{\frac{L(\delta R_i)}{L(R_i)}\le 1\right\}}\left(1-\frac{L(\delta R_i)}{L(R_i)}\right)\left(1-\frac{L(\delta R_i)}{L(R_i)}+\delta^\gamma\right)^{-1}\\
			&\qquad+ \mathbf{1}{\left\{\frac{L(\delta R_i)}{L(R_i)}> 1\right\}}\left(1-\frac{L(R_i)}{L(\delta R_i)}\right)\left(1-\frac{L(R_i)}{L(\delta R_i)}+\delta^\gamma\frac{L(R_i)}{L(\delta R_i)}\right)^{-1}\\
			&\sim \delta^{-\gamma}\left|\frac{L(\delta R_i)}{L(R_i)}-1\right|\;\text{ a.s. as }\; i\to\infty.
	\end{align*}}
	\end{proof}

	\begin{proof}[Proof of Lemma \ref{lemma_distribution_v_i_v_tilde_i}]
		The conditional independence and identical distribution of the pairs $(V_i^1, V_i^{*1}),\ldots,(V_i^s, V_i^{*s})$, for $i\ge I$, are straightforward. Next, observe that, given $S_i=S^*_i$, every geometric near-record $X_j$ associated with $R_i$ satisfies $\varphi(X_j)\in(\max\{\varphi(\delta R_i), \delta \varphi(R_i)\}, \varphi(R_i)]$ a.s. Moreover, note that the event $\{V_i^1=v,V_i^{*1}=v^*\}$ is almost surely equal to $\{\varphi(X_j)\in(\varphi(a^v\delta R_i), \varphi(a^{v+1}\delta R_i)],\varphi(X_j)\in(a^{v^*}\delta\varphi(R_i),a^{v^*+1}\delta\varphi(R_i)]\}$ for some geometric near-record $X_j$, that is,
		\begin{align*}
			\varphi(X_j)\in\left(\max\{\varphi(a^v\delta R_i), a^{v^*}\delta\varphi(R_i)\}, \min\{\varphi(a^{v+1}\delta R_i), a^{v^*+1}\delta\varphi( R_i)\}\right]\;\text{ a.s.}
		\end{align*}
		Thus, with probability $1$,
		\begin{align}
			\prob(&V_i^1 = v, V_i^{*1} =v^*\mid S_i=S^*_i,S^*_i = s, \mathcal{G})\nonumber\\
			&= \frac{\prob\left(\varphi(X_j)\in\left.\left(\max\{\varphi(a^v\delta R_i), a^{v^*}\delta\varphi(R_i)\}, \min\{\varphi(a^{v+1}\delta R_i), a^{v^*+1}\delta\varphi( R_i)\}\right.\right] \ \big\vert\ \mathcal{G}\right)}{\prob\left(\varphi(X_j)\in(\max\{\varphi(\delta R_i), \delta \varphi(R_i)\}, \varphi(R_i)]\ \big\vert\ \mathcal{G}\right)}\nonumber\\
			&= \left(\min\left\{\frac{L(\delta R_i)}{L(R_i)}, 1\right\}-\delta^\gamma\right)^{-1}\nonumber\\
			&\qquad\times \left(\min\left\{\delta^{v\frac{\gamma}{m}}\frac{L(a^v\delta R_i)}{L(R_i)}, \delta^{v^*\frac{\gamma}{m}}\right\}-\delta^{\frac{\gamma}{m}}\max\left\{\delta^{v\frac{\gamma}{m}}\frac{L(a^{v+1}\delta R_i)}{L(R_i)}, \delta^{v^*\frac{\gamma}{m}}\right\}\right)_+\label{eq_bivariate}.
		\end{align}
		Using the joint distribution \eqref{eq_bivariate}, we obtain that, almost surely,
		{\allowdisplaybreaks
			\begin{align}
				\prob(V_i^1 &\neq V_i^{*1} \mid S_i=S^*_i, S^*_i = s, \mathcal{G})\nonumber\\
				& = 1 - \sum_{v=0}^{m-1} \prob(V_i^1 = v, V_i^{*1} =v\mid S_i=S^*_i,S^*_i = s, \mathcal{G})\nonumber\\
				&=\sum_{v=0}^{m-1} \left[\frac{\delta^{v\frac{\gamma}{m}}(1-\delta^{\frac{\gamma}{m}})}{1-\delta^\gamma}-\left(\min\left\{\frac{L(\delta R_i)}{L(R_i)}, 1\right\}-\delta^\gamma\right)^{-1}\nonumber\right.\\
				&\left.\qquad\qquad\times\left(\min\left\{\delta^{v\frac{\gamma}{m}}\frac{L(a^v\delta R_i)}{L(R_i)}, \delta^{v\frac{\gamma}{m}}\right\}-\delta^{\frac{\gamma}{m}}\max\left\{\delta^{v\frac{\gamma}{m}}\frac{L(a^{v+1}\delta R_i)}{L(R_i)}, \delta^{v\frac{\gamma}{m}}\right\}\right)_+\right]\nonumber\\
				&\le\sum_{v=0}^{m-1} \left[\frac{\delta^{v\frac{\gamma}{m}}(1-\delta^{\frac{\gamma}{m}})}{1-\delta^\gamma}-\left(\min\left\{\frac{L(\delta R_i)}{L(R_i)}, 1\right\}-\delta^\gamma\right)^{-1}\nonumber\right.\\
				&\left.\qquad\qquad\times\left(\min\left\{\delta^{v\frac{\gamma}{m}}\frac{L(a^v\delta R_i)}{L(R_i)}, \delta^{v\frac{\gamma}{m}}\right\}-\delta^{\frac{\gamma}{m}}\max\left\{\delta^{v\frac{\gamma}{m}}\frac{L(a^{v+1}\delta R_i)}{L(R_i)}, \delta^{v\frac{\gamma}{m}}\right\}\right)_{\phantom{+}\!\!\!\!}\right]\nonumber\\ 
				&= \frac{1}{1-\delta^\gamma}\left(\min\left\{\frac{L(\delta R_i)}{L(R_i)}, 1\right\}-\delta^\gamma\right)^{-1}\sum_{v=0}^{m-1}\delta^{v\frac{\gamma}{m}}(f_1(v)-\delta^{\frac{\gamma}{m}}f_2(v)),\label{eq_iop}
		\end{align}}
		where the functions $f_1(v)$ and $f_2(v)$ are defined as:
		\begin{align*}
			f_1(v)&:= \min\left\{\frac{L(\delta R_i)}{L(R_i)}, 1\right\} - \min\left\{\frac{L(a^v\delta R_i)}{L(R_i)}, 1\right\}+ \delta^\gamma\min\left\{\frac{L(a^v\delta R_i)}{L(R_i)}, 1\right\} - \delta^\gamma\\
			&= \min\left\{\frac{L(\delta R_i)}{L(R_i)}-1, 0\right\} - \min\left\{\frac{L(a^v\delta R_i)}{L(R_i)}-1, 0\right\}+ \delta^\gamma\min\left\{\frac{L(a^v\delta R_i)}{L(R_i)}-1, 0\right\},\\
			f_2(v)&:= \min\left\{\frac{L(\delta R_i)}{L(R_i)}, 1\right\} - \max\left\{\frac{L(a^{v+1}\delta R_i)}{L(R_i)}, 1\right\}+ \delta^\gamma\max\left\{\frac{L(a^{v+1}\delta R_i)}{L(R_i)}, 1\right\}- \delta^\gamma\\
			&=\min\left\{\frac{L(\delta R_i)}{L(R_i)}-1, 0\right\} - \max\left\{\frac{L(a^{v+1}\delta R_i)}{L(R_i)}-1, 0\right\}+ \delta^\gamma\max\left\{\frac{L(a^{v+1}\delta R_i)}{L(R_i)}-1, 0\right\},
		\end{align*}
		for all $v=0,\ldots,m-1$. Note that  $f_1(v)$ and $f_2(v)$ are bounded above by
		\begin{align*}
			\left|\frac{L(\delta R_i)}{L(R_i)}-1\right|&+\max_{0\le v\le m-1}\left|\frac{L(a^v\delta R_i)}{L(R_i)}-1\right|+\delta^\gamma\max_{0\le v\le m-1}\left|\frac{L(a^v\delta R_i)}{L(R_i)}-1\right|\\
			&\le(2+\delta^\gamma)\max_{0\le v\le m-1}\left|\frac{L(a^v\delta R_i)}{L(R_i)}-1\right|.
		\end{align*}
		Therefore, we have that the expression  in the last line of \eqref{eq_iop} is smaller than or equal to
		\begin{align*}
			\frac{2+\delta^\gamma}{1-\delta^\gamma}&\left(\min\left\{\frac{L(\delta R_i)}{L(R_i)}, 1\right\}-\delta^\gamma\right)^{-1}\max_{0\le v\le m-1}\left|\frac{L(a^v\delta R_i)}{L(R_i)}-1\right|(1+\delta^{\frac{\gamma}{m}})\sum_{v=0}^{m-1}\delta^{v\frac{\gamma}{m}}\\
			&= \frac{(2+\delta^\gamma)(1+\delta^{\frac{\gamma}{m}})}{1-\delta^{\frac{\gamma}{m}}}\left(\min\left\{\frac{L(\delta R_i)}{L(R_i)}, 1\right\}-\delta^\gamma\right)^{-1}\max_{0\le v\le m-1}\left|\frac{L(a^v\delta R_i)}{L(R_i)}-1\right|,
		\end{align*}
		thereby completing the proof.
	\end{proof}
	
	\begin{proof}[Proof of Lemma \ref{prop_bound_probability}]
		By Lemmas \ref{lemma_distribution_s_i_s_tilde_i} and \ref{lemma_distribution_v_i_v_tilde_i}, we have, for all $i\ge I$,
		{\allowdisplaybreaks
			\begin{align*}
				\prob(&S_i=S^*_i, S^*_i\ge 1, V_i^j\ne V_i^{*j}\text{ for some } j=1,\ldots,S^*_i\mid\mathcal{G})\\
				&=\sum_{\ell=1}^\infty \prob(S_i=\ell, S^*_i=\ell\mid\mathcal{G})\ \prob(V_i^j\ne V_i^{*j}\text{ for some } j=1,\ldots,\ell\mid S_i=S^*_i, S^*_i=\ell,\mathcal{G})\\
				&\le \sum_{\ell=1}^\infty \ell\ \prob(S_i=\ell, S^*_i=\ell\mid\mathcal{G})\ \prob(V_i^1\ne V_i^{*1}\mid S_i=S^*_i, S^*_i=\ell,\mathcal{G})\\
				&=\delta^\gamma\frac{(2+\delta^\gamma)(1+\delta^{\frac{\gamma}{m}})}{1-\delta^{\frac{\gamma}{m}}}\left(\min\left\{\frac{L(\delta R_i)}{L(R_i)}, 1\right\}-\delta^\gamma\right)^{-1}\max_{0\le v\le m-1}\left|\frac{L(a^v\delta R_i)}{L(R_i)}-1\right|\\
				&\qquad\times\left(\mathbf{1}{\left\{\frac{L(\delta R_i)}{L(R_i)}>1\right\}}\frac{L(R_i)}{L(\delta R_i)}\sum_{\ell=1}^\infty \ell \bigg((1-\delta^\gamma)\frac{L(R_i)}{L(\delta R_i)}\bigg)^\ell\right.\\
				&\left.\qquad\qquad+\mathbf{1}{\left\{\frac{L(\delta R_i)}{L(R_i)}\le 1\right\}}\sum_{\ell=1}^\infty \ell \bigg(\frac{L(\delta R_i)}{L(R_i)}-\delta^\gamma\bigg)^\ell\right)\\
				&\sim \frac{(2+\delta^\gamma)(1+\delta^{\frac{\gamma}{m}})}{(1-\delta^{\frac{\gamma}{m}})\delta^\gamma}\max_{0\le v\le m-1}\left|\frac{L(a^v\delta R_i)}{L(R_i)}-1\right|\;\text{ a.s. as }\; i\to\infty.
			\end{align*}
		}
	\end{proof}
	
	\begin{proof}[Proof of Lemma \ref{prop_LLN_Resnick}]
		Observe that
		\begin{align*}
			\frac{\log{(R_i^*)}}{i} &= 	\frac{\log{(R_i)}}{i}\left(1 - \frac{\log{L(R_i)}}{\gamma\log{(R_i)}}\right)\sim \frac{\log{(R_i)}}{i} \;\text{ a.s. as }\; i\to\infty,
		\end{align*}
		since the slow variation of $L$ implies that $\log{L(x)} /\log{(x)}\to 0$ as $x\to\infty$ (see \cite{Bingham89}, Proposition 1.3.6, p. 16). Thus, it suffices to note that $\{\log(R_i^*/\varphi(A))\}_{i\in\N}$ behaves as a sequence of records drawn from an exponential distribution with parameter $\gamma$, which corresponds to the arrival times of a homogeneous Poisson process with rate $\gamma$ (see \cite{Shorrock72}). Hence,
		$$
		\frac{\log{(R_i^*)}}{i} = \frac{\log{(R_i^*/\varphi(A))}}{i} + \frac{\log{\varphi(A)}}{i} \toas \gamma^{-1}\;\text{ as }\; i\to\infty,
		$$
		which completes the proof of the first statement. The second assertion follows immediately.
	\end{proof}
	
	\begin{proof}[Proof of Lemma \ref{lemma_convergence}]
		Let $t> 0$. Since $L$ is slowly varying,
		$$
		\frac{L(tR_i)}{L(R_i)}-1\sim \log{\bigg(\frac{L(tR_i)}{L(R_i)}\bigg)}\;\text{ a.s. as } i\to\infty,
		$$
		so we will prove the result for the latter sequence. Let $i\in\N$. Note that
		$$
		\left|\log{\bigg(\frac{L(tR_i)}{L(R_i)}\bigg)}\right| = |h(\log{(tR_{i})})- h(\log{(R_{i})})|,
		$$
		where $h(x):= \log L(\e^x)$, $x\in\R$, is a differentiable function such that $h'(x) = \e^x L'(\e^x)/L(\e^x)$ for each $x\in\R$. Therefore, by the Mean Value Theorem, there exists, with probability $1$, some $C_i$ between $\log{(tR_{i})}$ and $\log{(R_{i})}$ such that
		\begin{align*}
			\left|\log{\bigg(\frac{L(tR_i)}{L(R_i)}\bigg)}\right|& i(\log{(i)})^{2+\eta} = \left|\frac{\e^{C_i}L'(\e^{C_i})}{L(\e^{C_i})}(\log{(tR_i)}-\log{(R_i)})\right|\ i(\log{(i)})^{2+\eta}\\
			&= |\log{(t)}|\left|\frac{\e^{C_i}L'(\e^{C_i})}{L(\e^{C_i})}\log{(\e^{C_i})}(\log{\log{(\e^{C_i})}})^{2+\eta}\right|\left|\frac{i(\log{(i)})^{2+\eta}}{C_i(\log{(C_i)})^{2+\eta}}\right|.
		\end{align*}
		Letting $i\to\infty$, we have
		$$
		\frac{i(\log{(i)})^{2+\eta}}{C_i(\log{(C_i)})^{2+\eta}}\toas \gamma,
		$$	
		by Lemma \ref{prop_LLN_Resnick}, together with the fact that $C_i\sim \log{(R_i)}$ and $\log{(C_i)}\sim \log{\log{(R_i)}}$ almost surely. Moreover, condition \eqref{eq_condition_L_3} implies
		$$
		\frac{\e^{C_i}L'(\e^{C_i})}{L(\e^{C_i})}\log{(\e^{C_i})}(\log{\log{(\e^{C_i})}})^{2+\eta}\toas 0
		$$
		and the result follows.
	\end{proof}

\section{Finite-sample performance and application} \label{section_simulations}

In this section, we assess the performance of our estimator. First, we use Monte Carlo simulations to analyze its behavior under different settings and compare it with some classical estimators. Then, we illustrate its practical use through a real financial data set.

The estimators used for comparison are Hill's estimator \cite{Hill75}, which is based on extreme order statistics, and Berred's estimators \cite{Berred92}, based on record values. Hill's estimator is defined as
\begin{equation*}
	\hat{\gamma}_{k}^{\text{H}} := \left\{\frac{1}{k}\sum_{i=1}^k \log{\left(\frac{X_{(i)}}{X_{(k+1)}}\right)}\right\}^{-1},
\end{equation*}
where $X_{(1)}\ge X_{(2)}\ge \cdots \ge X_{(k+1)}$ are the $k+1$ largest order statistics from the sample, and Berred's estimators are
\begin{align*}
	\hat{\gamma}_{\ell, n}^{\text{B1}} &:= \left\{\frac{1}{\ell}\log{\left(\frac{R_{n}}{R_{n-\ell}}\right)}\right\}^{-1},\\
	\hat{\gamma}_{\ell, n}^{\text{B2}} &:= \left\{\frac{1}{n\ell - \frac{1}{2}\ell(\ell-1)}\sum_{i=1}^\ell \log{(R_{n-i+1})}\right\}^{-1},
\end{align*}
where $\{R_n\}_{n\in\N}$ denotes the sequence of records and $1\le \ell<n$.

We compare our estimator with Berred's estimators in terms of the MSE for a fixed number of records, $n$. The comparison with Hill's estimator is more involved, since its performance is highly sensitive to the choice of $k$. To make the comparison fair, we also consider the number of the required observations as a performance measure, which is particularly relevant in destructive testing experiments. In such settings, estimation based on record-like observations or extreme order statistics can lead to substantial savings in resources. 
For records, Glick \cite{Glick78} showed that the sequence of records can be obtained without fully measuring (i.e., destroying) every observation. Specifically, a unit is fully measured only if it exceeds the current record level, that is, only if it becomes a new record. Geometric records can be obtained through a slight modification of this procedure, in which the threshold is set to $\delta$ times the current record. For Hill's estimator, the threshold is taken to be the $(k+1)$-th largest order statistic. Accordingly, we include the \emph{effective sampling size}, defined as the total number of fully measured units under this sampling scheme.

For the simulation study, we fix $n=10$ records, $\gamma\in\{1,2,3\}$, and several values of the parameters $\delta$, $k$, and $\ell$. We then simulate $10{,}000$ sequences of i.i.d.~samples following the Pareto, Fr\'{e}chet, and log-logistic distributions for each value of the tail index and the tuning parameters. From each sequence, all estimators are computed, allowing us to provide an estimate of their expectations and MSEs. Results are presented in Table \ref{table_mse}.  

Several interesting conclusions can be drawn from this simulation study.

\begin{itemize}
	\item Our estimator exhibits a slight negative bias in all cases. It is noteworthy that, for any fixed distribution and value of $\gamma$, the bias remains fairly stable across the considered values of $\delta$. In contrast, the MSE decreases as $\delta$ decreases, as expected due to the larger amount of information provided by the geometric near-records. Note, however, that taking $\delta$ too small significantly increases the effective sampling size, which translates into higher costs in destructive testing settings; moreover, it may potentially lead to the inclusion of non-extreme observations in the estimator, thereby yielding unreliable estimates of the tail index.
	
	\item The estimator shows a reasonable robustness when applied to distributions other than the Pareto model. In particular, its performance under the Fr\'{e}chet and log-logistic distributions remains comparable to that observed in the Pareto case. This suggests that the proposed estimator is not too sensitive to deviations from the exact Pareto distribution.
	
	\item When compared to Hill's estimator, Table \ref{table_mse} shows that, for a comparable MSE, our estimator requires a substantially smaller effective sample size. For instance, in the case of the $\dpareto(2;1)$ distribution, comparable bias and MSE values are obtained for $\delta=0.6$ (mean $=1.902$, MSE $=0.139$), corresponding to a median number of 26 geometric records, and for $k=30$ (mean $=2.072$, MSE $=0.156$), which relies on a median of 330 observations. Hence, similar (and even improved) accuracy is achieved by our estimator using substantially fewer effective observations. This indicates that the information extracted from geometric records is used more efficiently than that contained in the largest order statistics. A similar pattern is observed across the remaining settings.
	
	\item Regarding Berred's estimators, it is noteworthy that our estimator obtains smaller MSE in all but three cases, in which our estimator and second Berred's estimator achieve very close values (see the cases $\gamma=1$, $\delta=0.8$, $\ell=3$ for the three distributions in Table \ref{table_mse}). Finally, note that our estimator has smaller absolute bias than both Berred's estimators. Overall, our estimator tends to improve upon the record-based alternatives. This is consistent with the fact that our estimator uses not only records (as Berred's estimators do) but also geometric near-records, allowing for a flexible incorporation of information through the choice of $\delta$. 
	
\end{itemize}
{
\captionsetup{font=footnotesize}
\begin{sidewaystable}[htbp]
	\footnotesize 
	\setlength{\tabcolsep}{4.5pt} 
	\centering 
	\caption{Estimated expectation and MSE of our estimator $\hat{\gamma}_{\delta,m,n}^A$ in \eqref{eq_estimator_gamma}, Hill's estimator $\hat{\gamma}_{k}^{\text{H}}$, and Berred's estimators $\hat{\gamma}_{\ell,n}^{\text{B1}}$ and $\hat{\gamma}_{\ell,n}^{\text{B2}}$. The estimates of the MSE were computed from 10{,}000 independent replications based on i.i.d.~samples with $R_1>A=5$, generated until $n=10$ records (together with their associated geometric near-records) were observed. All results were obtained for $m=5$. In parentheses, we report the median effective sampling size of our estimator and Hill's estimator.} 
	\begin{tabular}{cccccccccccc} \toprule & $\delta$ & $\espe(\hat{\gamma}^A_{\delta,m,n})$ & $\operatorname{MSE}(\hat{\gamma}^A_{\delta,m,n})$ & $k$ & $\espe(\hat{\gamma}^{\text{H}}_{k})$ & $\operatorname{MSE}(\hat{\gamma}^{\text{H}}_{k})$ & $\ell$ & $\espe (\hat{\gamma}^{\text{B1}}_{\ell,n})$ & $\operatorname{MSE}(\hat{\gamma}^{\text{B1}}_{\ell,n})$ & $\espe(\hat{\gamma}^{\text{B2}}_{\ell,n})$ & $\operatorname{MSE}(\hat{\gamma}^{\text{B2}}_{\ell,n})$\\ 
	\midrule 
	\multirow{5}{1.5cm}{$\dpareto(1; 1)$} & 0.8 & 0.984 (12) & 0.077 (12) & 5 & 1.261 (62) & 0.576 (62) & 3 & 1.530 & 2.077 & 0.917 & 0.075 \\ 
	& 0.6 & 0.964 (16) & 0.055 (16) & 10 & 1.113 (110) & 0.172 (110) & 4 & 1.334& 0.875 & 0.908 & 0.078 \\ 
	& 0.5 & 0.958 (19) & 0.047 (19) & 15 & 1.073 (156) & 0.094 (156) & 5 & 1.254& 0.547 & 0.900 & 0.078 \\ 
	& 0.4 & 0.956 (24) & 0.038 (24) & 20 & 1.052 (200) & 0.065 (200) & 6 & 1.204& 0.406 & 0.887& 0.078 \\ 
	& 0.2 & 0.956 (47) & 0.022 (47) & 30 & 1.037 (281) & 0.041 (281) & 7 & 1.172 & 0.278 & 0.878& 0.079 \\ 
	\midrule 
	\multirow{5}{1.5cm}{$\dpareto(2; 1)$} & 0.8 & 1.941 (15) &0.232 (15) & 5 & 2.496 (69) &2.191 (69) & 3 & 2.975 &10.616 & 1.562 & 0.337 \\ 
	& 0.6 & 1.902 (26) &0.139 (26) & 10 & 2.218 (124) &0.651 (124) & 4 & 2.709 &4.646 & 1.538 & 0.354 \\ & 0.5 & 1.906 (38) &0.105 (38) & 15 & 2.141 (177) &0.372 (177) & 5 & 2.486 & 2.270 & 1.509 & 0.366 \\ 
	& 0.4 & 1.912 (58) &0.074 (58)& 20 & 2.109 (231) &0.258 (231) & 6 & 2.389 & 1.576 & 1.485 & 0.397 \\
	 & 0.2 & 1.953 (229) &0.023 (229) & 30 & 2.072 (330) & 0.156 (330) & 7 & 2.344 & 1.252 & 1.439 & 0.420 \\ 
	 \midrule \multirow{5}{1.5cm}{$\dpareto(3; 1)$} & 0.8 & 2.880 (19) &0.419 (19) & 5 & 3.758 (75)&5.183 (75) & 3 & 4.549 & 21.715 & 2.041 & 1.117 \\ 
	 & 0.6 & 2.865 (44) &0.210 (44) & 10 & 3.362 (138)&1.602 (138) & 4 & 4.012 & 9.279 & 1.990 & 1.153 \\ 
	 & 0.5 & 2.877 (74) &0.137 (74) & 15 & 3.204 (200) &0.834 (200) & 5 & 3.767 & 5.065 & 1.950 & 1.248 \\ 
	 & 0.4 & 2.912 (145) &0.077 (145)& 20 & 3.160 (260) &0.579 (260) & 6 & 3.576 & 3.454 & 1.895 & 1.348 \\ 
	 & 0.2 & 2.976 (1153) & 0.010 (1153) & 30 & 3.102 (376) &0.352 (376) & 7 & 3.501 & 2.723 & 1.847 & 1.463 \\ 
	 \midrule \addlinespace \midrule \multirow{5}{1.5cm}{$\dfrechet(1)$} & 0.8 & 0.970 (12) &0.075 (12) & 5 & 1.252 (62) & 0.593 (62) & 3 & 1.490 &2.094 & 0.904 & 0.073\\ 
	 & 0.6 & 0.958 (16) &0.054 (16) & 10 & 1.104 (111) & 0.159 (111) & 4 & 1.329 &0.963 & 0.899 & 0.075\\ 
	 & 0.5 & 0.953 (19) &0.045 (19) & 15 & 1.072 (157) & 0.094 (157) & 5 & 1.248 &0.514 & 0.890 & 0.075\\ 
	 & 0.4 & 0.946 (24) &0.037 (24)& 20 & 1.050 (201) & 0.064 (201) & 6 & 1.216 &0.375 & 0.878 & 0.076\\ & 0.2 & 0.940 (46) &0.023 (46) & 30 & 1.033 (287)& 0.038 (287) & 7 & 1.160 &0.301 & 0.867 & 0.078\\ \midrule \multirow{5}{1.5cm}{$\dfrechet(2)$} & 0.8 & 1.943 (15) &0.229 (15) & 5 & 2.483 (68)& 2.136 (68) & 3 & 3.002 &10.679 & 1.551 & 0.339 \\
	  & 0.6 & 1.901 (26) &0.139 (26) & 10 & 2.232 (124) & 0.667 (124) & 4 & 2.705 &3.984 & 1.534 & 0.355 \\ 
	  & 0.5 & 1.897 (37) &0.104 (37) & 15 & 2.138 (178) & 0.367 (178) & 5 & 2.497 &2.337 & 1.508 & 0.374 \\ 
	  & 0.4 & 1.900 (58) &0.077 (58)& 20 & 2.100 (230) & 0.258 (230) & 6 & 2.405 &2.405 & 1.477& 0.393 \\ 
	  & 0.2 & 1.924 (223) &0.026 (223) & 30 & 2.069 (331)& 0.160 (331) & 7& 2.325 & 1.188 & 1.444 & 0.421 \\
	   \midrule \multirow{5}{1.5cm}{$\dfrechet(3)$} & 0.8 & 2.878 (19) & 0.415 (19) & 5 & 3.770 (75) &5.241 (75) & 3 & 4.563 & 24.069 & 2.034 & 1.095 \\ 
	   & 0.6 & 2.869 (44) & 0.207 (44) & 10 & 3.335 (138)&1.454 (138) & 4 & 3.994 &8.650 & 2.003 & 1.185 \\ 
	   & 0.5 & 2.874 (74) & 0.138 (74) & 15 & 3.208 (201) &0.844 (201) & 5 & 3.765 & 5.168 & 1.948 & 1.274 \\ 
	   & 0.4 & 2.902 (143) & 0.077 (143)& 20 & 3.158 (261)&0.565 (261) & 6 & 3.616 &3.503 & 1.902 & 1.356 \\ 
	   & 0.2 & 2.932 (1110) &0.016 (1110) & 30 & 3.108 (379)&0.367 (379) & 7& 3.499 & 2.738& 1.852 & 1.467 \\ 
	   \midrule \addlinespace \midrule \multirow{5}{1.5cm}{$\dloglogistic(1)$} & 0.8 & 0.963 (12) & 0.072 (12) & 5 & 1.268 (63) & 0.624 (63) & 3& 1.478 & 2.019 & 0.899 & 0.072\\ 
	   & 0.6 & 0.946 (16) & 0.053 (16) & 10 & 1.114 (111) & 0.167 (111) & 4& 1.317 & 0.886 & 0.885 & 0.075\\ 
	   & 0.5 & 0.945 (19) & 0.044 (19) & 15 & 1.070 (158) & 0.092 (158) & 5& 1.246 & 0.557 & 0.881 & 0.077\\ 
	   & 0.4 & 0.938 (23) & 0.037 (23)& 20 & 1.050 (202) & 0.065 (202) & 6& 1.200 & 0.396 & 0.870 & 0.076\\ 
	   & 0.2 & 0.932 (45) & 0.024 (45) & 30 & 1.037 (289) & 0.040 (289) & 7& 1.170 & 0.303 & 0.856 & 0.079\\ 
	   \midrule \multirow{5}{1.5cm}{$\dloglogistic(2)$} & 0.8 & 1.932 (15) & 0.237 (15) & 5 & 2.490 (68.5) & 2.332 (68.5) & 3 & 3.032 & 10.208 & 1.554 &0.339 \\
	    & 0.6 & 1.898 (26) & 0.138 (26) & 10 & 2.231 (125) & 0.684 (125) & 4 & 2.682 & 4.343 & 1.532 &0.352 \\ 
	    & 0.5 & 1.896 (37) & 0.106 (37) & 15 & 2.137 (178) & 0.366 (178) & 5 & 2.510 & 2.481 & 1.498 &0.373 \\ 
	    & 0.4 & 1.895 (57) & 0.077 (57)& 20 & 2.105 (231) & 0.263 (231) & 6 & 2.403 & 1.587 & 1.474 &0.400 \\ 
	    & 0.2 & 1.904 (218) & 0.030 (218) & 30 & 2.069 (332) & 0.160 (332) & 7& 2.320 &1.160& 1.440 &0.426 \\ 
	    \midrule \multirow{5}{1.5cm}{$\dloglogistic(3)$} & 0.8 & 2.877 (19)&0.432 (19) & 5 & 3.744 (75)&4.989 (75) & 3 & 4.579 & 25.255 & 2.042 & 1.101 \\ 
	    & 0.6 & 2.843 (43)&0.211 (43) & 10 & 3.342 (139)&1.468 (139) & 4 & 3.977 & 8.642 & 2.003 & 1.178 \\ 
	    & 0.5 & 2.864 (74)&0.141 (74) & 15 & 3.201 (201)&0.817 (201) & 5 & 3.788 & 5.852 & 1.945 & 1.256 \\ 
	    & 0.4 & 2.900 (143)&0.077 (143)& 20 & 3.159 (260)&0.571 (260) & 6 & 3.599 & 3.376 & 1.904 & 1.363 \\ 
	    & 0.2 & 2.904 (1084.5)& 0.023 (1084.5) & 30 & 3.101 (378)&0.358 (378) & 7 & 3.498 & 2.786 &1.844 &1.461 \\
	     \bottomrule 
	     \end{tabular} 
	     \label{table_mse}
\end{sidewaystable}
}

In addition to the numerical results in Table \ref{table_mse}, another interesting characteristic of the proposed estimator is that it exhibits a smoother evolution of the estimates as data arrive sequentially than classical estimators such as Hill's estimator.

To illustrate this feature, we generate an i.i.d.~sample of size $N=10{,}000$ from the $\dloglogistic(3)$ distribution and compare Hill's estimates with those of our estimator in \eqref{eq_estimator_practical} as the sample size increases. Note that $N$ denotes the total number of observations rather than the number of records. We computed the trajectories of our estimator for $m=5$, $\delta\in\{0.5,0.4\}$, and $A=3$, and the corresponding trajectories of Hill's estimator. To ensure a meaningful comparison, we start counting observations once the first record exceeding $A$ is observed, and we choose $k$ so that, for each fixed $\delta$, the effective sampling sizes of both estimators are approximately the same, namely $k=2$ for $\delta=0.5$ and $k=5$ for $\delta=0.4$. Figure~\ref{figure_effective_sample_size_2} shows that our estimator converges quickly to the true value of $\gamma$ and exhibits smoother trajectories than Hill's estimator, without the abrupt jumps observed in the latter as new observations arrive.

\begin{figure}[htbp]
	\centering
	\caption{Sample paths of our estimator $\hat{\gamma}_{\delta,m,n}^A$ in \eqref{eq_estimator_practical} and Hill's estimator $\hat{\gamma}_{k}^{\text{H}}$ for a $\dloglogistic(3)$ sample, shown as functions of the sample size, with $m=5$, $\delta \in \{0.5,0.4\}$, $A = 3$, and $k \in \{2,5\}$.}
	\begin{subfigure}[t]{\textwidth}
		\centering
		\includegraphics[scale = 0.48]{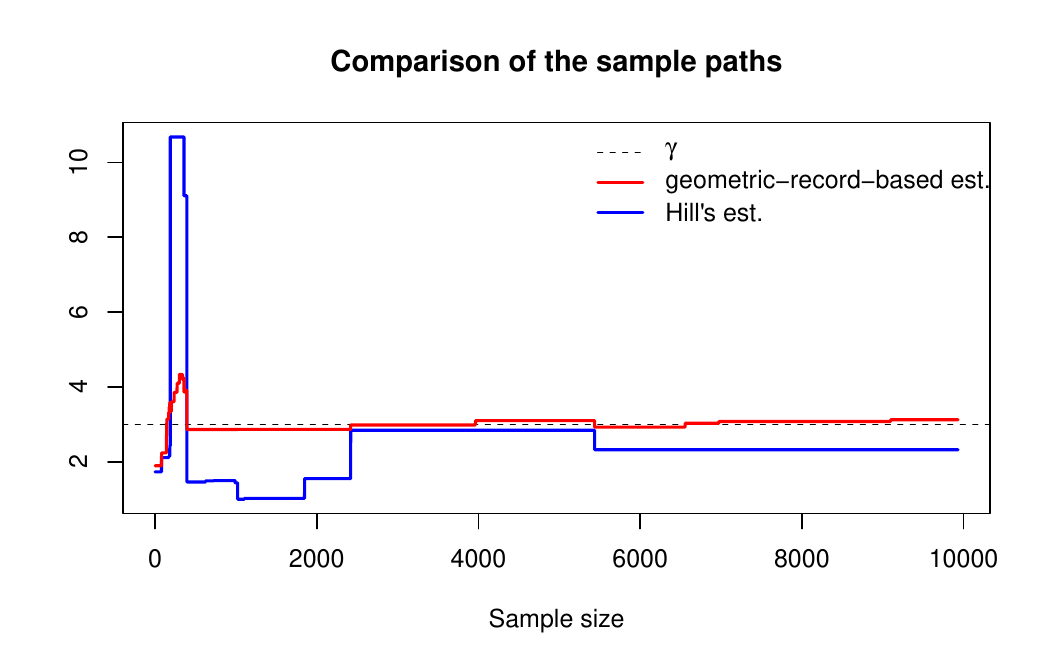}
		\caption{$\delta=0.5$ and $k=2$. The number of geometric records was $26$ and Hill's estimator used $24$ observations.}
	\end{subfigure}
	\begin{subfigure}[t]{\textwidth}
		\centering
		\includegraphics[scale = 0.48]{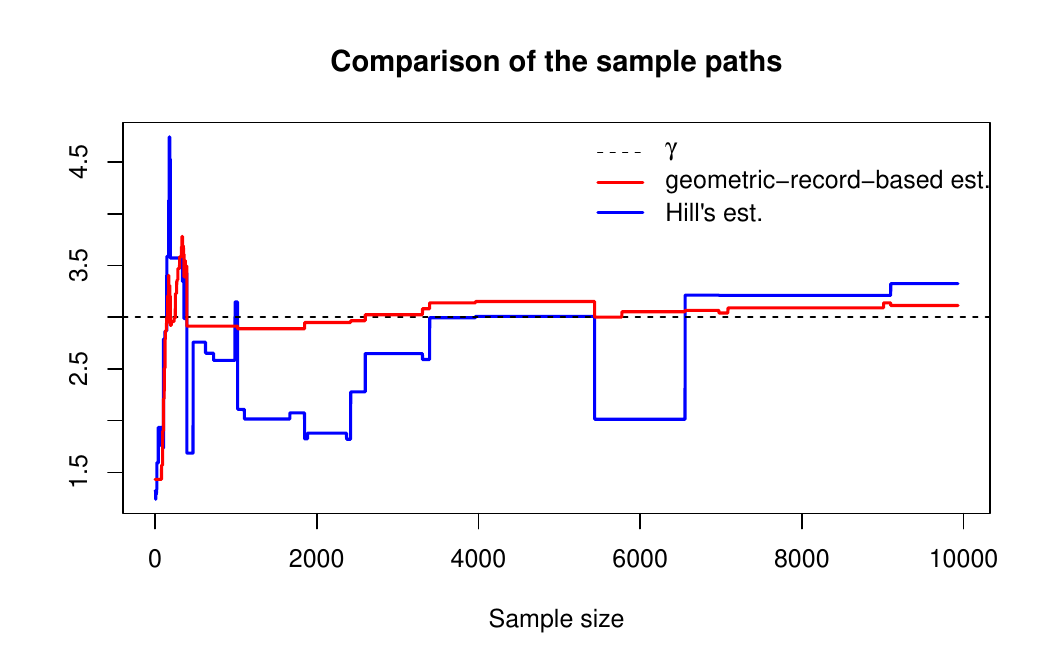}
		\caption{$\delta=0.4$ and $k=5$. The number of geometric records was $50$ and Hill's estimator used $58$ observations.}
	\end{subfigure}
	\label{figure_effective_sample_size_2}
\end{figure}

\subsection{Application to the study of the Dow Jones Industrial Average index}\label{section_data}

Since the seminal and highly influential works of \cite{Gopikrishnan98, Gopikrishnan99}, several studies have analyzed and confirmed the heavy-tailedness of the fluctuations in financial market returns, including recent papers such as \cite{Ahn24}. Empirical evidence allowed to establish a well-known universal law in economics: the so-called inverse cubic law. According to this law, the upper tail of the absolute standardized log-returns of a financial market index is approximately Pareto with tail index $\gamma=3$. 

Formally, given a finite time series $\{X_t:t=1,\ldots,T\}$ of the closing prices of a financial market index, its log-returns correspond to the time series
$$
Y_t := \log{(X_t)}-\log{(X_{t-1})},\quad 2\le t\le T,
$$
whence we obtain the standardized log-returns as:
$$
Z_t := \frac{Y_t - \overline{Y}}{S_Y},\quad 2\le t\le T,
$$
with $\overline{Y}$ and $S_Y$ being the sample mean and standard deviation of the entire series $\{Y_t:t=1,\ldots,T\}$, respectively. In this setting, the inverse cubic law \cite{Gopikrishnan98} establishes
$$
\prob(|Z_t| > x) \sim x^{-3}\;\text{ as } x\to\infty.
$$

We analyze the heaviness of the tail of the daily standardized log-returns of the Dow Jones Industrial Average index (DJI) from January 1, 1985, to December 31, 2025. We extracted the data from the website Investing.com; see \href{https://www.investing.com/indices/us-30}{https://www.investing.com/indices/us-30}. We apply our estimator $\hat{\gamma}^A_{\delta,m,n}$ in  \eqref{eq_estimator_practical} and \eqref{eq_estimator_beta_practical} in order to assess whether our results align with the existing literature in finance. 

For this purpose, we first note that, if the DJI satisfies the inverse cubic law, then
$$
\log{\prob(|Z_t| > x)} \approx -3\log{(x)}\;\text{ for large }x.
$$
Accordingly, we can plot the logarithm of the empirical survival function (ESF) of $|Z_t|$ against $\log{(x)}$ in order to detect from which threshold onward the linear dependence holds. As shown in Figure \ref{plot_esf_dj}, the linear behavior starts at $x \approx 1.5$. A regression fit for $x\ge 1.5$ yields $-0.310 -3.387 \log{(x)}$.
\begin{figure}[htbp]
	\centering
	\caption{Logarithm of the empirical survival function of the absolute standardized log-returns of the Dow Jones index.}
	\includegraphics[scale = 0.5]{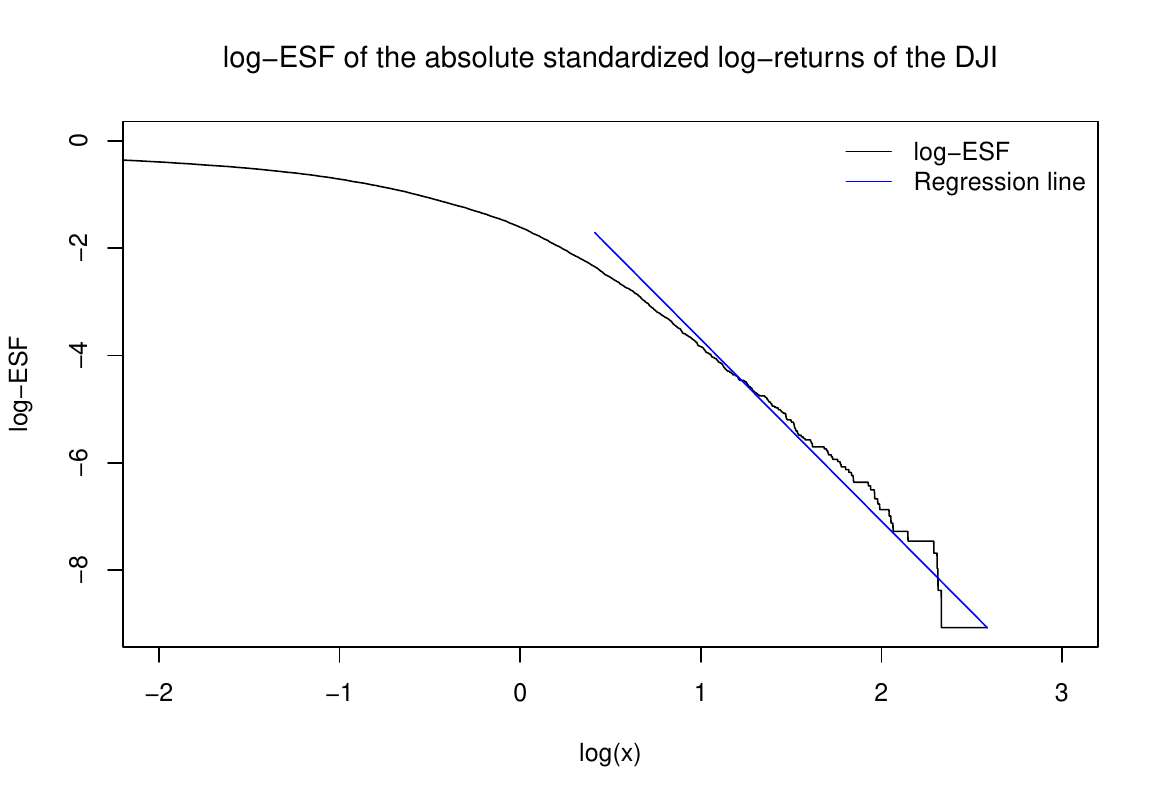}
	\label{plot_esf_dj}
\end{figure}

This analysis suggests a suitable choice for the censoring parameter $A$, namely $A=1.5$. We also fix $m=5$, and then compute our estimator for several values of $\delta$ and their $95\%$ asymptotic confidence intervals (see  \eqref{confidence_interval_gamma}). The results are displayed in Figure \ref{plot_dj}, and some selected estimates are reported in Table \ref{table_dj}.

\begin{figure}[h!]
	\centering
	\caption{Estimates of the tail index of the upper tail of the absolute standardized log-returns of the Dow Jones index, together with their $95\%$ asymptotic confidence intervals, for $\delta \in \{0.20, 0.21, 0.22, \ldots, 0.80\}$, $m = 5$, and $A=1.5$. The number of records in the sample exceeding $A$ is $n=9$.}
	\includegraphics[scale = 0.5]{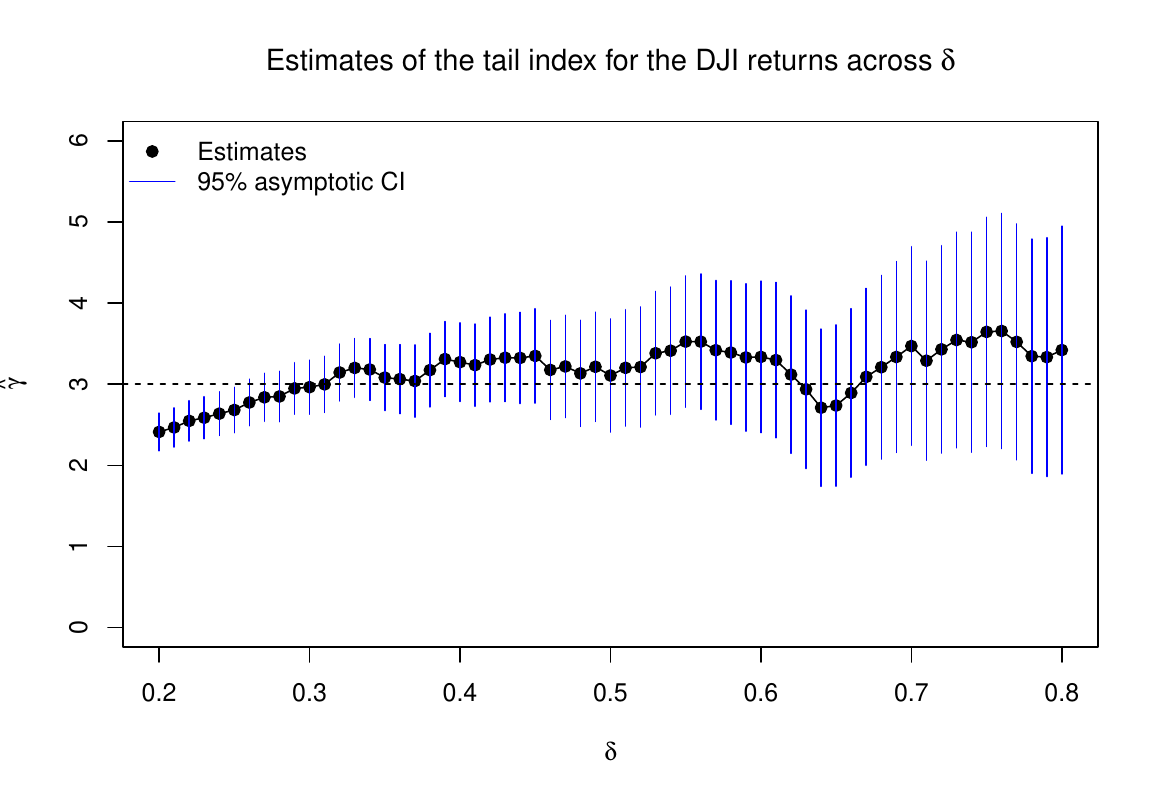}
	\label{plot_dj}
\end{figure}

\begin{table}[htbp]
	\centering
	\caption{Estimates of the tail index and $95\%$ asymptotic confidence intervals in Figure \ref{plot_dj}, for $\delta \in \{0.4, 0.5, 0.6\}$.}
	\begin{tabular}{ccc}
		\toprule
		$\delta$ & $\hat{\gamma}^A_{\delta,m,n}$ & $95\%$ asymptotic CI \\
		\midrule
		0.6 & 3.337 & $[2.403, 4.271]$\\
		0.5 & 3.107 & $[2.411, 3.804]$ \\
		0.4 & 3.272 & $[2.787, 3.757]$\\
		\bottomrule
	\end{tabular}
	\label{table_dj}
\end{table}

The evidence provided by Figure \ref{plot_dj} and Table \ref{table_dj} is consistent with the inverse cubic law. For all values of $\delta$ considered, our estimator yields estimates close to $3$, thereby supporting the empirical validity of this economic law. Moreover, it is noteworthy that for $0.26\le\delta\le 0.80$ the corresponding asymptotic confidence intervals contain the value $3$. As discussed in Section \ref{section_simulations}, choosing $\delta$ too small may result in the inclusion of non-extreme observations, which are not relevant for tail inference, and may yield unreliable estimates and confidence intervals.

\section{Conclusions and future work}\label{conclusion}
In this paper, we have introduced an estimator of the tail index for the Pareto and Pareto-type distributions based on geometric records.   We have proved its consistency and asymptotic normality for the general case of Pareto-type distributions under mild conditions on the tail.
	
Our simulation study indicates that the proposed estimator improves upon Berred’s estimators in terms of MSE. The comparison with Hill’s estimator depends on the tuning parameters $\delta$ in our method and $k$ in Hill’s estimator. When these are calibrated so that the effective number of fully measured units is comparable, our estimator exhibits improved estimation accuracy. In the context of destructive testing, where each full measurement entails a cost, this suggests cost savings relative to Hill’s estimator. 	
An additional advantage of our estimator over Hill’s lies in its behavior when data arrive sequentially, as is common in industrial and monitoring settings. Hill's estimator can exhibit rough trajectories as new data arrive, causing abrupt changes in the estimated tail index. By contrast, our estimator tends to produce smoother trajectories, making it particularly well-suited for sequential or streaming data where practitioners wish to update estimates continuously without large fluctuations, as in on-line monitoring settings.
	
Also, Monte Carlo simulations suggest that our estimator is robust to deviations from the Pareto distribution. Indeed, Table \ref{table_mse} indicates that the MSE for the other heavy-tailed distributions analyzed is comparable to that obtained under the Pareto case, supporting the applicability of the estimator in a wide range of practical settings.

We have applied our estimator to a real data set in an economic context, where heavy-tailed distributions are commonplace. The results are consistent with those reported in the literature and exhibit reasonable stability across different values of $\delta$. This illustrates the possible practical applicability of the proposed methodology for tail-index analysis.

A practical aspect of our estimator is the choice of $\delta$. The results in Table \ref{table_mse} show that, in general, smaller values of $\delta$ lead to more accurate estimates, as more information is incorporated. However, two points should be noted. First, choosing a small $\delta$ requires more data, which can be costly in destructive settings. Second, smaller values of $\delta$ lead to the inclusion of observations that are far from the extremes, i.e., from regions of the distribution that are not truly in the tail.  Based on extensive Monte Carlo simulations across different heavy-tailed distributions, values around $\delta \approx 0.4$ provide a reasonable balance between efficiency and data usage.

Overall, the results suggest that geometric-record-based methods provide a flexible alternative for tail index estimation in settings where data arrive sequentially or full observation is costly, complementing existing extreme-value approaches.

We conclude by outlining some directions for future work:

\begin{itemize}
	\item In the present paper, all geometric records contribute equally to the estimator. A natural refinement would be to assign higher weights to geometric near-records associated with the most recent records, which are more likely to lie deeper in the tail. While this modification may be suboptimal for the exact Pareto case, it may improve performance for other Pareto-type distributions, since observations corresponding to the first records may not yet belong to the tail, and should therefore receive less weight in the estimation. A thorough study is required both to provide guidelines for the choice of the weights and to establish the asymptotic properties of the resulting estimators.

	\item Another potential improvement of our estimator would be to use an adaptive choice of $\delta$. Allowing $\delta$ to vary as records are observed could help regulate the number of geometric records collected over time.
\end{itemize}

\section*{Acknowledgments}

This research was partially funded by grant PID2023-150234NB-I00 from the Ministry of Science, Innovation and Universities of Spain. The authors are members of the research group Modelos Estoc\'{a}sticos (E46\_23R), Gobierno de Arag\'{o}n. M. Alcalde gratefully acknowledges the support by the doctoral scholarship ORDEN ECU/592/2024 from Gobierno de Arag\'{o}n.

\begin{appendices}
\section{}
\begin{lemma}\label{lemma_truncated_geometric}
	Let $p\in(0,1)$ and $n\in\N$. If $X$ follows a truncated geometric distribution to  $\{0,1,\ldots,n\}$, then
	\begin{enumerate}[a)]
		\item $$
		\espe(X) = \frac{1-p}{p}-(n+1)\frac{(1-p)^{n+1}}{1-(1-p)^{n+1}}.
		$$ \label{lemma_truncated_geometric_expectation}
		\item $$
		\var(X) = \frac{1-p}{p^2}-(n+1)^2\frac{(1-p)^{n+1}}{(1-(1-p)^{n+1})^2}.
		$$
		\label{lemma_truncated_geometric_variance}
	\end{enumerate}
	\begin{proof}
		This is an elementary exercise that can be found in many textbooks. A direct (but somewhat tedious) proof consists in computing the expectation and variance from their definitions using the probability mass function of the truncated geometric distribution, $
		\prob(X=k)=\frac{p(1-p)^k}{1-(1-p)^{n+1}}$, $k=0,\ldots,n$.
		A more elegant argument uses conditional calculus. Let $Z\sim \dgeom(p)$ and define $Y=\mathbf{1}(Z>n)$. Then $
		\espe(X)=\espe(Z\mid Y=0)$, $\var(X)=\var(Z\mid Y=0)$ and note that, conditional on $\{Y=1\}$, $Z$ is equal in distribution to $n+1+Z$. The result then follows by expressing the expectation and variance of $Z$ conditioning on $Y$.
	\end{proof}
\end{lemma}

\begin{lemma} \label{prop_exponential_appendix}
	Let $\{E_n\}_{n\in\N}$ be a sequence of i.i.d.~random variables following an exponential distribution with rate $1$. Then, $	\limsup_{n\to\infty} E_n/\log{(n)} = 1$.
\end{lemma}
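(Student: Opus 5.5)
The plan is to prove the two inequalities $\limsup_{n\to\infty} E_n/\log(n)\le 1$ and $\limsup_{n\to\infty} E_n/\log(n)\ge 1$ almost surely, each via one of the Borel--Cantelli lemmas. The only input is the exact tail $\prob(E_n>x)=\e^{-x}$ for $x\ge0$.

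For the upper bound, fix $\varepsilon>0$ and consider the events $B_n:=\{E_n>(1+\varepsilon)\log(n)\}$. Then $\prob(B_n)=n^{-(1+\varepsilon)}$, which is summable. The first Borel--Cantelli lemma then gives that, almost surely, $E_n\le(1+\varepsilon)\log(n)$ for all large $n$. Hence $\limsup_{n\to\infty} E_n/\log(n)\le 1+\varepsilon$ almost surely. Letting $\varepsilon\downarrow0$ along a countable sequence, say $\varepsilon=1/k$, and intersecting the corresponding probability-one events yields $\limsup_{n\to\infty} E_n/\log(n)\le1$ almost surely.

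For the lower bound, consider the events $C_n:=\{E_n>\log(n)\}$. These satisfy $\prob(C_n)=n^{-1}$, so $\sum_n\prob(C_n)=\infty$, and the $C_n$ are independent because the $E_n$ are. The second Borel--Cantelli lemma gives that $C_n$ occurs infinitely often almost surely. On that event $E_n/\log(n)>1$ for infinitely many $n$, so $\limsup_{n\to\infty} E_n/\log(n)\ge1$ almost surely.

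Combining the two bounds gives the claim. No step is a real obstacle. The only points needing care are using a countable sequence of values of $\varepsilon$ in the upper bound, so that the exceptional null sets can be combined, and invoking independence explicitly, since the lower bound depends on the second Borel--Cantelli lemma.
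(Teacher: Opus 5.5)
Your proposal is correct and takes essentially the same route as the paper: the first Borel--Cantelli lemma on $\{E_n>(1+1/k)\log(n)\}$ for the upper bound, and the second Borel--Cantelli lemma (using independence) for the lower bound. The one small difference is in the lower bound: you use the threshold $\log(n)$ itself, where $\prob(E_n>\log(n))=n^{-1}$ is already non-summable, whereas the paper uses $(1-1/k)\log(n)$ and then lets $k\to\infty$, so your version is marginally cleaner.
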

\begin{proof}
	Let $k \in\N$ be fixed. For each $n\in\N$, we define the events $A_n:= \{E_n>(1+1/k)\log{(n)}\}$ and $B_n:= \{E_n>(1-1/k)\log{(n)}\}$. By the first Borel-Cantelli Lemma, it is straightforward to see that $\prob(A_n\text{ i.o.})=0$ and, therefore, $\limsup_{n\to\infty} (\log{(n)})^{-1}E_n\le 1+1/k$ a.s. Also, the second Borel-Cantelli Lemma allows us to conclude that $\prob(B_n\text{ i.o.})=1$, which yields $\limsup_{n\to\infty} (\log{(n)})^{-1}E_n\ge 1-1/k$ a.s. Thus $\limsup_{n\to\infty} (\log{(n)})^{-1}E_n\in(1-1/k,1+1/k)$ a.s.~for all $k\in\N$, and the result follows.
\end{proof}

\begin{lemma}\label{lemma_liminf_exponential}
	Let $\{E_n\}_{n\in\N}$ be a sequence of i.i.d.~random variables following an exponential distribution with rate $\lambda >0$. Also, let $\{x\}$ denote the fractional part of $x$, for all $x\in\R$, and $\{a_n\}_{n\in\N}$ be a sequence of positive numbers such that $a_n\to\infty$ as $n\to\infty$. Then, almost surely, $\liminf_{n\to\infty} a_n\{E_n\} = c$,
	with $c = 0,\infty$. Moreover, $\liminf_{n\to\infty} a_n\{E_n\} = 0$ a.s.~if and only if $\sum_{n=1}^\infty1/a_n = \infty$.
\end{lemma}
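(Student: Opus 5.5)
The plan is to exploit independence of the events $B_n(\varepsilon):=\{a_n\{E_n\}<\varepsilon\}$ and apply the two Borel--Cantelli lemmas, after computing $\prob(B_n(\varepsilon))$ asymptotically. Since $a_n\to\infty$, for any fixed $\varepsilon>0$ we have $\varepsilon/a_n<1$ for large $n$. Then
\[
\prob(\{E_n\}<\varepsilon/a_n)=\sum_{k=0}^\infty \prob\bigl(k\le E_n<k+\varepsilon/a_n\bigr)=\frac{1-\e^{-\lambda\varepsilon/a_n}}{1-\e^{-\lambda}}.
\]
As $n\to\infty$ this is asymptotic to $\lambda\varepsilon/\bigl((1-\e^{-\lambda})a_n\bigr)$. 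Hence, for every fixed $\varepsilon>0$, $\sum_n\prob(B_n(\varepsilon))<\infty$ if and only if $\sum_n 1/a_n<\infty$.

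\textbf{Case $\sum_n 1/a_n=\infty$.} For each $\varepsilon>0$ the events $B_n(\varepsilon)$ are independent, because the $E_n$ are, and their probabilities have divergent sum. The second Borel--Cantelli lemma then gives $\prob(B_n(\varepsilon)\text{ i.o.})=1$, so $\liminf_n a_n\{E_n\}\le\varepsilon$ a.s. Intersecting over $\varepsilon=1/k$, $k\in\N$, gives $\liminf_n a_n\{E_n\}=0$ a.s.

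\textbf{Case $\sum_n 1/a_n<\infty$.} Now $\sum_n\prob(B_n(\varepsilon))<\infty$ for every $\varepsilon>0$. The first Borel--Cantelli lemma yields that a.s. $a_n\{E_n\}\ge\varepsilon$ eventually, so $\liminf_n a_n\{E_n\}\ge\varepsilon$. Taking $\varepsilon=k\to\infty$ along the integers gives $\liminf_n a_n\{E_n\}=\infty$ a.s.

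These two cases are exhaustive, so the almost sure limit inferior is always either $0$ or $\infty$, with $0$ occurring exactly when $\sum_n 1/a_n=\infty$. This proves both claims simultaneously. An alternative route to the dichotomy is Kolmogorov's zero–one law, since the liminf is a tail random variable, but the explicit computation above already gives it.

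The only delicate point is the probability computation, which must hold uniformly enough to compare the two series. The asymptotic equivalence $1-\e^{-x}\sim x$ as $x\to0$ handles this, and the finitely many early indices where $\varepsilon/a_n\ge1$ do not affect convergence of the series. The condition $a_n>0$ guarantees that the events $B_n(\varepsilon)$ are the correct ones to consider.
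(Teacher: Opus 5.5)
Your proposal is correct and follows the paper's proof essentially step for step. Both compute $\prob(a_n\{E_n\}\le\varepsilon)=(1-\e^{-\lambda\varepsilon/a_n})/(1-\e^{-\lambda})\sim\lambda\varepsilon/\bigl((1-\e^{-\lambda})a_n\bigr)$, apply the first and second Borel--Cantelli lemmas, and let $\varepsilon$ range over countably many values ($1/k$ and $k$); this last step states explicitly what the paper covers with ``since the choice of $\eta$ is arbitrary.''
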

\begin{proof}
	Let $\eta>0$ and $n_0\in\N$ such that $a_n^{-1}\eta<1$ for all $n\ge n_0$. Then,
	\begin{align*}
		\prob(a_n\{E_n\}\le \eta) &= \sum_{k=0}^\infty\prob(\{E_n\}\le a_n^{-1}\eta, E_n\in(k,k+1))\\ &=\sum_{k=0}^\infty\prob(E_n\in(k, k + a_n^{-1}\eta))\\
		&= \frac{1 - \e^{-\frac{\lambda \eta}{a_n} }}{1-\e^{-\lambda}}\\ &\sim \frac{\lambda\eta}{(1-\e^{-\lambda})a_n}\;\text{ as }\; n\to\infty.
	\end{align*}
	Hence, if $\sum_{n\ge 1} 1/a_n<\infty$, the first Borel-Cantelli Lemma implies $\liminf_{n\to\infty} a_n\{E_n\} > \eta$ a.s. When $\sum_{n\ge 1} 1/a_n=\infty$, the second Borel-Cantelli Lemma yields $\liminf_{n\to\infty} a_n\{E_n\} \le \eta$ a.s. Since the choice of $\eta$ is arbitrary, it follows that $\liminf_{n\to\infty} a_n\{E_n\} = \infty$ a.s.~if $\sum_{n\ge 1} 1/a_n<\infty$ and $\liminf_{n\to\infty} a_n\{E_n\}= 0$ a.s.~if $\sum_{n\ge 1} 1/a_n=\infty$.
\end{proof}
\end{appendices}

\begin{thebibliography}{99}
	
	\bibitem[Ahn \emph{et al.}(2024)]{Ahn24} Ahn, K., Cong, L., Jang, H. \emph{et al.} (2024). Business cycle and herding behavior in stock returns: theory and evidence. \emph{Financ. Innov.}, \textbf{10}, 6. DOI: \href{https://doi.org/10.1186/s40854-023-00540-z}{https://doi.org/10.1186/s40854-023-00540-z}.
	
	\bibitem[Arnold \emph{et al.}(1998)]{Arnold98} Arnold, B. C., Balakrishnan, N., \& Nagaraja, H. N. (1998). \emph{Records}. John Wiley \& Sons. DOI: \href{https://doi.org/10.1002/9781118150412}{https://doi.org/10.1002/9781118150412}.
	
		\bibitem[Azeem \emph{et al.}(2025)]{Azeem25} 
		Azeem, R., Aslam, M., \& Mehmood, T. (2025). Bayesian and classical methods for log logistic distribution: exploring upper record values in bladder cancer remission times. \emph{J. Stat. Theory Appl.}, \textbf{24}, 570--589. DOI: \href{https://doi.org/10.1007/s44199-025-00114-1}{https://doi.org/10.1007/s44199-025-00114-1}.
	
	
	\bibitem[Begu\v{s}i\'{c} \emph{et al.}(2018)]{Begusic18} Begu\v{s}i\'{c}, S., Kostanj\v{c}ar, Z., Stanley, H. E., \& Podobnik, B. (2018). Scaling properties of extreme price fluctuations in Bitcoin markets. \emph{Physica A: Statistical Mechanics and its Applications}, \textbf{510}, 400--406. DOI: \href{https://doi.org/10.1016/j.physa.2018.06.131}{https://doi.org/10.1016/j.physa.2018.06.131}.
	
	\bibitem[Berred(1992)]{Berred92} Berred, M. (1992). On record values and the exponent of a distribution with regularly varying upper tail. \emph{J. Appl. Prob.}, \textbf{29}(3), 575--586. DOI: \href{https://doi.org/10.2307/3214894}{https://doi.org/10.2307/3214894}.
	
	
	\bibitem[Bingham \emph{et al.}(1989)]{Bingham89} Bingham, N. H., Goldie, C. M., \& Teugels, J. L. (1989). \emph{Regular Variation}. Encyclopedia of Mathematics and its Applications. Cambridge University Press.
	
	\bibitem[Chaudhry \emph{et al.}(2025)]{Chaudhry25}Chaudhry, S. M., Chen, X. H., Ahmed, R., \& Nasir, M. A. (2025). Risk modelling of ESG (environmental, social, and governance), healthcare, and financial sectors. \emph{Risk Analysis}, \textbf{45}, 477--495. DOI: \href{https://doi.org/10.1111/risa.14195}{https://doi.org/10.1111/risa.14195}.
	
	\bibitem[Chen \& Liu(2017)]{Chen17} Chen, Q., \& Liu, J. (2017). The conditional Borel-Cantelli lemma and applications. \emph{J. Korean Math. Soc.}, \textbf{54}(2), 441--460. DOI: \href{https://doi.org/10.4134/JKMS.j160036}{https://doi.org/10.4134/JKMS.j160036}.
	
	
	\bibitem[Davletov(2022)]{Davletov22} Davletov, F. (2022). Estimating the tail index of conditional distribution of asset returns. \emph{International Journal of Financial Research}, \textbf{13}(2), 21615. DOI: \href{https://doi.org/10.5430/ijfr.v13n2p14}{https://doi.org/10.5430/ijfr.v13n2p14}. 
	
	
	\bibitem[Eliazar(2005)]{Eliazar2005} Eliazar, I.  (2005). On geometric record times. \emph{Physica A: Stat. Mech. Appl.}, \textbf{348}, 181-198. DOI: \href{https://doi.org/10.1016/j.physa.2004.09.009}{https://doi.org/10.1016/j.physa.2004.09.009}.
	
					\bibitem[Empacher \emph{et al.}(2025)]{Empacher25} Empacher, C., Kamps, U., \& Schmiedt, A. B. (2025). Prediction intervals for future Pareto record claims. \emph{Eur. Actuar. J.}, \textbf{15}, 163--197. DOI: \href{https://doi.org/10.1007/s13385-024-00397-1}{https://doi.org/10.1007/s13385-024-00397-1}.
					
	
	\bibitem[Fedotenkov(2020)]{Fedotenkov20} Fedotenkov, I. (2020). A review of more than one hundred Pareto-tail index estimators. \emph{Statistica}, \textbf{80}(3), 245--299. DOI: \href{https://doi.org/10.6092/issn.1973-2201/9533}{https://doi.org/10.6092/issn.1973-2201/9533}.
	
	\bibitem[Ferguson(1996)]{Ferguson96} Ferguson, T. S. (1996). \emph{A Course in Large Sample Theory}. Texts in Statistical Science. Chapman \& Hall/CRC.
	
	\bibitem[Glick(1978)]{Glick78} Glick, N. (1978). Breaking records and breaking boards. \emph{The American Mathematical Monthly}, \textbf{85}(1), 2--26. DOI: \href{https://doi.org/10.2307/2978044}{https://doi.org/10.2307/2978044}.

	\bibitem[Gomes \emph{et al.}(2020)]{Gomes20} Gomes, M. I., Caeiro, F., Figueiredo, F., Henriques-Rodrigues, L., \& Pestana, D. (2020). Corrected-Hill versus partially reduced-bias value-at-risk estimation. \emph{Communications in Statistics-Simulation and Computation}, \textbf{49}(4), 867--885. DOI: \href{https://doi.org/10.1080/03610918.2018.1489053}{https://doi.org/10.1080/03610918.2018.1489053}.
	
	\bibitem[Gopikrishnan \emph{et al.}(1998)]{Gopikrishnan98} Gopikrishnan, P.,  Meyer, M., Amaral, L. A. N., \& Stanley, H. E. (1998). Inverse cubic law for the distribution of stock price variations. \emph{Eur. Phys. J. B}, \textbf{3}, 139--140. DOI: \href{https://doi.org/10.1007/s100510050292}{https://doi.org/10.1007/s100510050292}.
	
	\bibitem[Gopikrishnan \emph{et al.}(1999)]{Gopikrishnan99} Gopikrishnan, P., Plerou, V., Amaral, L. A. N., Meyer, M., \& Stanley, H. E. (1999). Scaling of the distribution of fluctuations of financial market indices. \emph{Physical Review E}, \textbf{60}(5), 5305--5316. DOI: \href{https://doi.org/10.1103/PhysRevE.60.5305}{https://doi.org/10.1103/PhysRevE.60.5305}.
	
			
	\bibitem[Gouet \emph{et al.}(2014)]{Gouet14} Gouet, R., L\'opez, F. J., Maldonado, L., \& Sanz, G. (2014). Statistical inference for the geometric distribution based on 
	$\delta$-records. \emph{Computational Statistics \& Data Analysis}, \textbf{78}, 21--32. DOI: \href{https://doi.org/10.1016/j.csda.2014.04.002}{https://doi.org/10.1016/j.csda.2014.04.002}.
	
		\bibitem[Gouet \emph{et al.}(2012)]{Gouet12Geom} Gouet, R., L\'opez, F. J., \& Sanz, G. (2012). On geometric records: rate of appearance and magnitude. \emph{J. Stat. Mech.: Theory Exp.}, \textbf{2012}(01), P01005. DOI: \href{https://doi.org/10.1088/1742-5468/2012/01/P01005}{https://doi.org/10.1088/1742-5468/2012/01/P01005}.
	
	
	\bibitem[Hill(1975)]{Hill75} Hill, B. M. (1975). A simple approach to inference about the tail of a distribution. \emph{Ann. Stat.}, \textbf{3}(5), 1163--1174. DOI: \href{https://doi.org/10.1214/aos/1176343247}{https://doi.org/10.1214/aos/1176343247}.
	
		
	\bibitem[Langousis \emph{et al.}(2016)]{Langousis16} Langousis, A., Mamalakis, A., Puliga, M., \& Deidda, R. (2016). Threshold detection for the generalized Pareto distribution: Review of representative methods and application to the NOAA NCDC daily rainfall database. \emph{Water Resources Research}, \textbf{52}(4), 2659--2681. DOI: \href{https://doi.org/10.1002/2015WR018502}{https://doi.org/10.1002/2015WR018502}.
	
	\bibitem[Louzaoui \& El Arrouchi(2020)]{Louzaoui20} Louzaoui, A., \& El Arrouchi, M. (2020). On the maximum likelihood estimation of extreme value index based on $k$-record values. \emph{Journal of Probability and Statistics}, \textbf{2020}(2), 1--9. DOI: \href{https://doi.org/10.1155/2020/5497413}{https://doi.org/10.1155/2020/5497413}. 
	
	\bibitem[Louzaoui \& El Arrouchi(2023)]{Louzaoui23}Louzaoui, A., \& El Arrouchi, M. (2023). Improving the bias of a pseudo-maximum likelihood estimate of the extreme value index by $k$-records. \emph{J. Stat. Theory Appl.}, \textbf{22}, 54--69. DOI: \href{https://doi.org/10.1007/s44199-023-00055-7}{https://doi.org/10.1007/s44199-023-00055-7}.
	
	
	\bibitem[Nerantzaki \& Papalexiou(2022)]{Nerantzaki22} Nerantzaki, S. D., \& Papalexiou, S. M. (2022). Assessing extremes in hydroclimatology: A review on probabilistic methods. \emph{Journal of Hydrology}, \textbf{605}, 127302. DOI: \href{https://doi.org/10.1016/j.jhydrol.2021.127302}{https://doi.org/10.1016/j.jhydrol.2021.127302}.
	
	\bibitem[Nolan(2020)]{Nolan20} Nolan, J. P. (2020). \emph{Univariate Stable Distributions}. Springer Series in Operations Research and Financial Engineering. Springer Nature. DOI: \href{https://doi.org/10.1007/978-3-030-52915-4}{https://doi.org/10.1007/978-3-030-52915-4}.

	
	\bibitem[Pickands(1975)]{Pickands75} Pickands, J. III (1975). Statistical inference using extreme order statistics. \emph{Ann. Statist.}, \textbf{3}(1), 119--131. DOI: \href{https://doi.org/10.1214/aos/1176343003}{https://doi.org/10.1214/aos/1176343003}.
	
		
	\bibitem[Schmiedt \emph{et al.}(2025)]{Schmiedt25} Schmiedt, A. B., Empacher, C., \& Kamps, U. (2025). One- and two-sided prediction intervals for future Pareto record values with applications. \emph{J. Stat. Theory Appl.}, \textbf{24}, 489--514. DOI: \href{https://doi.org/10.1007/s44199-025-00119-w}{https://doi.org/10.1007/s44199-025-00119-w}.
	
	\bibitem[She \emph{et al.}(2025)]{She25} She, R., Dai, L., \& Ling, S. (2025). Testing for change-points in heavy-tailed time series---A Winsorized CUSUM approach. \emph{Journal of Business \& Economic Statistics}, 1--13. DOI: \href{https://doi.org/10.1080/07350015.2025.2561747}{https://doi.org/10.1080/07350015.2025.2561747}.
	
	\bibitem[Shorrock(1972)]{Shorrock72} Shorrock, R. W. (1972). On record values and record times. \emph{Journal of Applied Probability}, \textbf{9}(2), 316--326. DOI: \href{https://doi.org/10.2307/3212801}{https://doi.org/10.2307/3212801}.
	
	\bibitem[Vogel \emph{et al.}(2024)]{Vogel24}Vogel, R. M., Papalexiou, S. M., Lamontagne, J. R., \& Dolan, F. C. (2024). When heavy tails disrupt statistical inference. \emph{The American Statistician}, \textbf{79}(2), 221--235. DOI: \href{https://doi.org/10.1080/00031305.2024.2402898}{https://doi.org/10.1080/00031305.2024.2402898}.
	
\end{thebibliography}
\end{document}